\numberwithin{equation}{section} 
\theoremstyle{plain}
\newtheorem{theorem}{Theorem}[section] 
\newtheorem{lemma}[theorem]{Lemma} 
\newtheorem{corollary}[theorem]{Corollary}
\theoremstyle{definition}
\newtheorem{definition}[theorem]{Definition}
\newtheorem{assumption}[theorem]{Assumption}
\theoremstyle{remark}
\newtheorem{remark}[theorem]{Remark}
\newcommand{\secref}[1]{Section~\ref{#1}}
\newcommand{\defref}[1]{Definition~\ref{#1}}
\newcommand{\thmref}[1]{Theorem~\ref{#1}}
\newcommand{\lemref}[1]{Lemma~\ref{#1}}
\renewcommand{\algref}[1]{Algorithm~\ref{#1}}
\newcommand{\assref}[1]{Assumption~\ref{#1}}
\newcommand{\remref}[1]{Remark~\ref{#1}}
\newcommand{\R}{\mathbb{R}} 
\newcommand{\bigO}{\mathcal{O}} 
\DeclareMathOperator*{\argmin}{arg\,min} 
\DeclareMathOperator{\proj}{proj} 
\newcommand{\grad}{\nabla} 
\newcommand{\defeq}{:=} 
\newcommand{\bmat}[1]{\begin{bmatrix}#1\end{bmatrix}} 
\algrenewcommand\algorithmicrequire{\textbf{Input:}}
\algrenewcommand\algorithmicensure{\textbf{Output:}}
\renewcommand{\b}[1]{\bm{#1}} 
\newcommand{\bd}{\b{d}}
\newcommand{\bg}{\b{g}}
\newcommand{\br}{\b{r}}
\newcommand{\bs}{\b{s}}
\newcommand{\bu}{\b{u}}
\newcommand{\bv}{\b{v}}
\newcommand{\bx}{\b{x}}
\newcommand{\by}{\b{y}}
\newcommand{\bee}{\b{e}}  
\renewcommand{\t}[1]{\widetilde{#1}} 
\newcommand{\kappaef}{\kappa_{\textnormal{ef}}}
\newcommand{\kappaeg}{\kappa_{\textnormal{eg}}}
\newcommand{\gammainc}{\gamma_{\textnormal{inc}}}
\newcommand{\gammadec}{\gamma_{\textnormal{dec}}}
\newcommand{\flow}{f_{\textnormal{low}}}
\newcommand{\balpha}{\b{\alpha}}
\newcommand{\blambda}{\b{\lambda}}
\newcommand{\bphi}{\b{\phi}}
\newcommand{\bell}{\b{\ell}}
\begin{document}
\title{Model Construction for Convex-Constrained Derivative-Free Optimization}
\author{
Lindon Roberts\thanks{School of Mathematics and Statistics, University of Sydney, Camperdown NSW 2006, Australia (\texttt{lindon.roberts@sydney.edu.au}). 
This work was supported by the Australian Research Council Discovery Early Career Award DE240100006.}}

\date{\today}
\maketitle

\begin{abstract}
We develop a new approximation theory for linear and quadratic interpolation models, suitable for use in convex-constrained derivative-free optimization (DFO).
Most existing model-based DFO methods for constrained problems assume the ability to construct sufficiently accurate approximations via interpolation, but the standard notions of accuracy (designed for unconstrained problems) may not be achievable by only sampling feasible points, and so may not give practical algorithms.
This work extends the theory of convex-constrained linear interpolation developed in [Hough \& Roberts, \textit{SIAM J.~Optim}, 32:4 (2022), pp.~2552--2579] to the case of linear regression models and underdetermined quadratic interpolation models.
\end{abstract}

\textbf{Keywords:}  derivative-free optimization, convex constraints, interpolation
\\

\textbf{Mathematics Subject Classification:}  65K05, 90C30, 90C56

\section{Introduction} \label{sec_introduction}
This work is concerned with model-based methods for derivative-free optimization (DFO).
Such methods are widely studied and have proven to be popular in practice \cite{Conn2009,Audet2017,Larson2019}.
Most commonly, model-based methods construct a local interpolant for the objective from selected evaluations which is then used in place of Taylor series in a standard framework, most commonly  trust-region methods \cite{Conn2007}.
The convergence and worst-case complexity of model-based DFO in the unconstrained setting is well-established.
Here, we are interested in nonconvex minimization over a convex set,
\begin{align}
	\min_{\bx\in C} f(\bx), \label{eq_main_problem}
\end{align}
where $f:\R^n\to\R$ is smooth (but nonconvex), and $C\subseteq\R^n$ is a closed, convex set with nonempty interior.
Such problems appear in the derivative-free setting, for example bound and linear constraints appearing in problems from climate science \cite{Tett2022,Scheurer2015}, deep learning \cite{Ughi2020} and systems design engineering \cite{Kulshreshtha2015}.
Although a model-based DFO framework for solving \eqref{eq_main_problem} was recently developed in \cite{LR_ConvexDFO2021}, it only considered the case of constructing linear interpolants from function evaluations (at feasible points).
Outside of structured problems such as nonlinear least-squares \cite{LR_DFOGN2019}, quadratic interpolants are the most common models used in model-based DFO \cite{Powell2006,Powell2009,Conn2009}.
In this work, we develop a comprehensive approximation theory for linear regression and underdetermined quadratic interpolation models based on sampling only from feasible points.
This theory fits the requirements of the method from \cite{LR_ConvexDFO2021}, and so allows this framework to be used with more practical model choices.

\subsection{Existing work}
Model-based DFO methods for unconstrained problems have well-established convergence \cite{Conn2007,Conn2009} and worst-case complexity theory \cite{Garmanjani2016,Larson2019}.
This theory relies on building (typically linear or quadratic) interpolants for the objective function based on samples of the objective at specific points, which must be ``fully linear'', i.e.~satisfying specific error bounds.
Unlike much approximation theory in numerical analysis (e.g.~\cite{Trefethen2020}), it is typically assumed that we have limited ability to select the sample points and instead should use as many pre-existing evaluations as possible (motivated by problems with computationally expensive objectives, an important DFO use case).
In this setting, the model accuracy can be bounded by quantities only depending on the choice of sampling points (and information about the smoothness of the objective).
Originally in \cite{Conn2007}, the concept of ``$\Lambda$-poised'' interpolation sets was introduced as a sufficient condition for constructing fully linear interpolation models, as well as developing procedures for improving the choice of sample points (while retaining as many existing points as possible) to construct a $\Lambda$-poised set.
This was extended in \cite{Conn2008} for polynomial regression and underdetermined polynomial interpolation models.
A specific form of underdetermined quadratic interpolation, namely minimum Frobenius interpolation, was introduced and extensively studied in \cite{Powell2004,Powell2004a} and is the foundation of several state-of-the-art codes \cite{Powell2006,Powell2009,LR_DFOLS2019}.
An analysis of minimum Frobenius interpolation in terms of $\Lambda$-poisedness/fully linear error bounds was given in \cite{Conn2009}.
Alternate model constructions have been proposed based on weighted linear regression \cite{Billups2013}, underdetermined quadratic interpolation based on Sobolev norms \cite{Zhang2014}, radial basis functions (RBF) \cite{Wild2008} and combinations of polynomial and RBFs \cite{Augustin2017}, for example.

Many model-based DFO methods have been proposed for constrained optimization; see \cite[Section 7]{Larson2019} for a thorough survey.
We note in particular the following strictly feasible methods: \cite{Conejo2013} studies convex-constrained problems, \cite{Powell2009,Gratton2011} and \cite[Section 6.3]{Wild2009} consider bound constraints, and \cite{Gumma2014} considers linear inequality constraints.
Strictly feasible methods for nonconvex-constrainted problems have also been considered in \cite{Conn1998,Conejo2015,Xuan2024}.
Of these, where convergence was established it was based on the assumption that fully linear models could be constructed using the same $\Lambda$-poised interpolation sets as in the unconstrained case.
However, this is potentially impractical, as in some settings ``\emph{it may be impossible to obtain a fully linear model using only feasible points}'' \cite[p.~362]{Larson2019}.

More recently, \cite{LR_ConvexDFO2021} introduces a model-based DFO method for solving \eqref{eq_main_problem} by adapting the unconstrained approach from \cite{Conn2009a} based on gradient-based trust-region methods for convex-constrained optimization \cite[Chapter 12]{Conn2000}.
The method has convergence and worst-case complexity results which align with unconstrained model-based DFO \cite{Garmanjani2016} and convex-constrained gradient-based trust-region methods \cite{Cartis2012b}.
To make this approach practical, it introduced a weaker notion of fully linear models which are sufficient for algorithm convergence, but which can be attained using linear interpolation over a $\Lambda$-poised set of only feasible points.
It also demonstrated how to modify an interpolation set to make it $\Lambda$-poised (without changing too many points).
However, linear models are not typically used in practice, except for special cases (such as nonlinear least-squares minimization \cite{LR_DFOGN2019}), with quadratic models being preferred \cite{Powell2006,Powell2009,LR_DFOLS2019}.

We additionally note that the BOBYQA code \cite{Powell2009} for bound-constrained optimization uses maximization of Lagrange polynomials inside the feasible region---similar to how we will define $\Lambda$-poisedness in this setting---but this was a heuristic with no theoretical analysis.\footnote{In fact, our results here give some theoretical justification to this approach.}

\subsection{Contributions}
The contributions of this work are:
\begin{itemize}
    \item A generalization of the linear interpolation theory (based on sampling feasible points) from \cite{LR_ConvexDFO2021} to the case of linear regression models. This includes fully linear error bounds and procedures to generate $\Lambda$-poised interpolation sets (which are simpler than those developed in \cite{LR_ConvexDFO2021}). 
    \item A full approximation theory for underdetermined quadratic interpolation (in the style of \cite{Powell2004,Powell2004a}) based on sampling only feasible points. This includes derivation of fully linear error bounds, defining the relevant notion of $\Lambda$-poisedness in this setting, and procedures for efficiently generating $\Lambda$-poised sets. By construction, in the case of a full interpolation set of $(n+1)(n+2)/2$ points for quadratic interpolation, this approach produces the same result as standard quadratic interpolation, and so our results also apply to this setting.
\end{itemize}
While the linear regression theory is of independent interest for DFO applied to noisy objectives \cite{Larson2016}, it is primarily included as a prerequisite for deriving the quadratic results.
The results proven in both cases are sufficient to enable these model constructions to be used in the algorithmic framework from \cite{LR_ConvexDFO2021} (with the associated convergence and worst-case complexity guarantees).

While the formulations and results are very similar to those for the unconstrained case from \cite{Conn2008}, the constrained setting necessitates new approaches for proving the results (which when applied to the $C=\R^n$ case provide simpler proofs of existing results).

\paragraph{Organization}
A summary of the modified fully linear model accuracy requirement and associated algorithm from \cite{LR_ConvexDFO2021} is given in \secref{sec_background}.
That $\Lambda$-poisedness (only over the feasible region) guarantees fully linear interpolation models is first shown for linear regression models in \secref{sec_linreg} and then for (underdetermined) quadratic interpolation models in \secref{sec_quad_interp}.
Lastly, \secref{sec_constructing_models} provides concrete procedures for identifying and constructing $\Lambda$-poised quadratic interpolation models using only feasible points.

\paragraph{Notation}
We let $\|\cdot\|$ be the Euclidean norm of vectors and the operator 2-norm of matrices, and use $B(\bx,\Delta)$ for $\bx\in\R^n$ and $\Delta>0$ for the closed ball $\{\by\in\R^n : \|\by-\bx\|\leq\Delta\}$.

\section{Background} \label{sec_background}
This section summarizes the key background and results from \cite{LR_ConvexDFO2021}, which provide a model-based DFO algorithm for solving \eqref{eq_main_problem}.

We now outline how problem \eqref{eq_main_problem} can be solved using model-based DFO methods.
We begin with outlining our problem assumptions.

\begin{assumption} \label{ass_smoothness}
	The objective function $f:\R^n\to\R$ is bounded below by $\flow$ and continuously differentiable, and $\grad f$ is $L_{\grad f}$-Lipschitz continuous.
\end{assumption}

Although $\grad f$ exists, we assume that it is not available to the algorithm as an oracle.

\begin{assumption} \label{ass_feasible_set}
	The feasible set $C\subseteq\R^n$ is closed and convex with nonempty interior.
\end{assumption}

\assref{ass_feasible_set} implies that the Euclidean projection operator for $C$,
\begin{align}
	\proj_C(\by) := \argmin_{\bx\in C} \|\by-\bx\|^2,
\end{align}
is a well-defined function $\proj_C:\R^n\to C$ \cite[Theorem 6.25]{Beck2017}.
Our framework is based on a setting in which $\proj_C$ is inexpensive to evaluate (at least compared to evaluations of the objective $f$).
Several examples of feasible sets $C$ which have simple analytic expressions for $\proj_C$ are given in \cite[Table 6.1]{Beck2017}.

In this work, as in \cite{LR_ConvexDFO2021}, we aim to find a first-order optimal solution to \eqref{eq_main_problem}.
To measure this, we use the following first-order criticality measure from \cite[Section 12.1.4]{Conn2000}, defined for all $\bx\in C$:
\begin{align}
	\pi^f(\bx) \defeq \left|\min_{\substack{\bx+\bd \in C\\ \|\bd\|\leq 1}} \grad f(\bx)^T \bd\right|. \label{eq_opt_measure}
\end{align}
From \cite[Theorem 12.1.6]{Conn2000}, we have that $\pi^f$ is a non-negative, continuous function of $\bx$ and $\pi^f(\bx^*)=0$ if and only if $\bx^*$ is a first-order critical point for \eqref{eq_main_problem}.
In the unconstrained case $C=\R^n$, then \eqref{eq_opt_measure} simplifies to $\pi^f(\bx) = \|\grad f(\bx)\|$.
For notational convenience, we define $\pi^f_k := \pi^f(\bx_k)$, where $\bx_k\in\R^n$ is the $k$-th iterate of our algorithm.

\subsection{Model Construction \& Accuracy}
We will consider a model-based DFO approach for solving \eqref{eq_main_problem}.
Specifically, at each iteration $k$ we construct a local quadratic approximation to $f$ which we hope to be accurate near the current iterate $\bx_k\in\R^n$:
\begin{align}
	f(\by) \approx m_k(\by) := c_k + \bg_k^T (\by-\bx_k) + \frac{1}{2}(\by-\bx_k)^T H_k (\by-\bx_k), \label{eq_model}
\end{align}
for some $c_k\in\R$, $\bg_k\in\R^n$ and $H_k\in\R^{n\times n}$ with $H_k=H_k^T$.

\begin{assumption} \label{ass_boundedhess}
	There exists $\kappa_H \geq 1$ such that $\|H_k\| \leq \kappa_H - 1$ for all $k$.
\end{assumption}

Convergence of our algorithm will require that the local models $m_k$ \eqref{eq_model} are sufficiently accurate approximations of the objective $f$ (at least on some iterations).
The required notion of `sufficiently accurate' is the following:

\begin{definition}  \label{def_fl}
	The model $m_k$ \eqref{eq_model} is \emph{$C$-fully linear} in $B(\bx_k,\Delta_k)$ if there exist $\kappaef,\kappaeg>0$, independent of $k$, such that
	\begin{subequations} \label{eq_fl}
	\begin{align}
		\max_{\substack{\bx_k+\bd \in C\\ \|\bd\|\leq \Delta_k}} |f(\bx_k+\bd) - m_k(\bx_k+\bd)| &\leq \kappaef \Delta_k^2, \label{eq_fl_f} \\
		\max_{\substack{\bx_k+\bd \in C\\ \|\bd\|\leq 1}} \left|(\grad f(\bx_k)-\bg_k)^T \bd\right| &\leq \kappaeg \Delta_k. \label{eq_fl_g}
	\end{align}
	\end{subequations}
	When we wish to refer to this notion in the abstract sense, we use the term ``$C$-full linearity''.
\end{definition}

We note in particular that \eqref{eq_fl_f} uses the constraint $\|\bd\|\leq \Delta_k$ and \eqref{eq_fl_g} uses $\|\bd\|\leq 1$.
This essentially corresponds to the standard definition of fully linear models \cite[Definition 3.1]{Conn2009a} in the unconstrained case $C=\R^n$.

Our algorithm will require the existence of two procedures related to $C$-full linearity: given a model $m_k$, iterate $\bx_k$ and trust-region radius $\Delta_k$, we assume that we can
\begin{itemize}
	\item Verify whether or not $m_k$ is $C$-fully linear in $B(\bx_k,\Delta_k)$; and
	\item If $m_k$ is not $C$-fully linear, create a new model (typically a small modification of $m_k$) which is $C$-fully linear in $B(\bx_k,\Delta_k)$.
\end{itemize}

Lastly, our model $m_k$ \eqref{eq_model} induces an approximate first-order criticality measure, namely
\begin{align}
 \pi^m(\bx) \defeq \left|\min_{\substack{\bx+\bd \in C\\ \|\bd\|\leq 1}} \bg_k^T \bd\right|. \label{eq_pik}
\end{align}
As above, for convenience we will define $\pi^m_k := \pi^m(\bx_k)$.

\subsection{Algorithm Specification}
Our DFO method is based on a trust-region framework.
That is, given the local model $m_k$ \eqref{eq_model}, we calculate a potential new iterate as $\bx_k+\bs_k$, where $\bs_k$ is an approximate minimizer of the trust-region subproblem
\begin{align}
 \min_{\substack{\bx+\bs \in C\\ \|\bs\|\leq \Delta_k}} m_k(\bx_k+\bs), \label{eq_trs}
\end{align}
where $\Delta_k>0$ is a parameter updated dynamically inside the algorithm.
Formally, we require the following, which can be achieved using a Goldstein-type linesearch method \cite[Algorithm 12.2.2 \& Theorem 12.2.2]{Conn2000}.

\begin{assumption} \label{ass_cdec}
	There exists a constant $c_1\in(0,1)$ such that the computed step $\bs_k$ satisfies $\bx_k+\bs_k\in C$, $\|\bs_k\|\leq\Delta_k$ and the generalized Cauchy decrease condition:
	\begin{align}
		m_k(\bx_k) - m_k(\bx_k + \bs_k) \geq c_1\pi^m_k \min\left(\frac{\pi^m_k}{1 + \|H_k\|}, \Delta_k, 1\right). \label{eq_2cdec}
	\end{align}
\end{assumption}

The full derivative-free algorithm for solving \eqref{eq_main_problem} from \cite{LR_ConvexDFO2021}, called CDFO-TR, is given in \algref{alg_cdfotr}.
The overall structure is similar to other model-based DFO methods, such as \cite[Algorithm 4.1]{Conn2009a} for unconstrained minimization.

\begin{algorithm}[tb]
\begin{algorithmic}[1]
\Require Starting point $\bx_0\in\R^n$ and trust-region radius $\Delta_0>0$.
\vspace{0.2em}
\Statex \underline{Parameters:} maximum trust-region radius $\Delta_{\max} \geq \Delta_0$, scaling factors $0 < \gammadec < 1 < \gammainc$, criticality constants $\epsilon_C,\mu>0$, and acceptance threshold $\eta\in(0,1)$.
\State Build an initial model $m_0$ \eqref{eq_model}.
\For{$k=0,1,2,\ldots$}
	\If{$\pi^m_k<\epsilon_C$ \textbf{and} \textit{($\pi^m_k<\mu^{-1}\Delta_k$ \textnormal{\textbf{or}} $m_k$ is not $C$-fully linear in $B(\bx_k,\Delta_k)$)}} \label{ln_main_start}
		\State \underline{Criticality step:} Set $\bx_{k+1}=\bx_k$. If $m_k$ is $C$-fully linear in $B(\bx_k,\Delta_k)$, set $\Delta_{k+1}=\gammadec\Delta_k$, otherwise set $\Delta_{k+1}=\Delta_k$. Construct $m_{k+1}$ to be $C$-fully linear in $B(\bx_{k+1},\Delta_{k+1})$.
	\Else \quad $\leftarrow$ \textit{$\pi^m_k\geq\epsilon_C$ or ($\pi^m_k\geq\mu^{-1}\Delta_k$ and $m_k$ is $C$-fully linear in $B(\bx_k,\Delta_k)$)}
		\State Approximately solve \eqref{eq_trs} to get a step $\bs_k$.
		\State Evaluate $f(\bx_k+\bs_k)$ and calculate ratio 
		\begin{align}
			\rho_k \defeq \frac{f(\bx_k)-f(\bx_k+\bs_k)}{m_k(\bx_k)-m_k(\bx_k+\bs_k)}. \label{eq_ratio_generic}
		\end{align}
		\If{$\rho_k \geq \eta$}
			\State \underline{Successful step:} Set $\bx_{k+1}=\bx_k+\bs_k$ and $\Delta_{k+1}=\min(\gammainc\Delta_k,\Delta_{\max})$. Form $m_{k+1}$ in any manner.
		\ElsIf{$m_k$ is not $C$-fully linear in $B(\bx_k,\Delta_k$)}
			\State \underline{Model-improving step:} Set $\bx_{k+1}=\bx_k$ and $\Delta_{k+1}=\Delta_k$, and construct $m_{k+1}$ to be $C$-fully linear in $B(\bx_{k+1},\Delta_{k+1})$.
		\Else \quad $\leftarrow$ \textit{$\rho_k<\eta$ and $m_k$ is $C$-fully linear in $B(\bx_k,\Delta_k)$}
			\State \underline{Unsuccessful step:} Set $\bx_{k+1}=\bx_k$ and $\Delta_{k+1}=\gammadec\Delta_k$. Form $m_{k+1}$ in any manner.
		\EndIf
	\EndIf \label{ln_main_end}
\EndFor
\end{algorithmic}
\caption{CDFO-TR: model-based DFO method for \eqref{eq_main_problem}, from \cite{LR_ConvexDFO2021}.}
\label{alg_cdfotr}
\end{algorithm}

We have the following global convergence and worst-case complexity results for \algref{alg_cdfotr} from \cite{LR_ConvexDFO2021}.

\begin{theorem}[Theorem 3.10 \& Corollary 3.15, \cite{LR_ConvexDFO2021}] \label{thm_limpif}
	If Assumptions~\ref{ass_smoothness}, \ref{ass_boundedhess} and \ref{ass_cdec} hold, then $\lim_{k\to\infty} \pi^f_k = 0$.
	Moreover, if $\epsilon\in(0,1]$ and $\epsilon_C \geq c_2 \epsilon$ for some constant $c_2>0$, then the number of iterations $k$ before \algref{alg_cdfotr} produces an iterate with $\pi^f_k < \epsilon$ is at most $\bigO(\kappa_H \kappa_d^2 \epsilon^{-2})$, where $\kappa_d := \max(\kappaef, \kappaeg)$.
\end{theorem}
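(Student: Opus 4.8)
The plan is to follow the now-standard trust-region derivative-free convergence and complexity template (as in \cite{Conn2009a,Garmanjani2016}), adapted so that every appearance of $\|\grad f(\bx_k)\|$ from the unconstrained analysis is replaced by the constrained criticality measures $\pi^f_k$ and $\pi^m_k$; the whole argument rests on tracking how $\Delta_k$, $\pi^m_k$, and $C$-full linearity interact across the four branches of \algref{alg_cdfotr}. I would begin by establishing three preliminary estimates. (i) \emph{Criticality comparison:} if $m_k$ is $C$-fully linear in $B(\bx_k,\Delta_k)$, then $|\pi^f_k-\pi^m_k|\leq\kappaeg\Delta_k$, since $\pi^f_k$ and $\pi^m_k$ are absolute values of minima of the linear functionals $\grad f(\bx_k)^T\bd$ and $\bg_k^T\bd$ over the \emph{same} compact set $\{\bd : \bx_k+\bd\in C,\ \|\bd\|\leq 1\}$, and \eqref{eq_fl_g} bounds the difference of these functionals uniformly by $\kappaeg\Delta_k$. (ii) \emph{Sufficient decrease:} combining \eqref{eq_2cdec} with $\|H_k\|\leq\kappa_H-1$ shows any successful step ($\rho_k\geq\eta$) decreases $f$ by at least $\eta c_1\pi^m_k\min(\pi^m_k/\kappa_H,\Delta_k,1)$. (iii) \emph{Full linearity forces success:} applying \eqref{eq_fl_f} at both $\bx_k$ and $\bx_k+\bs_k$ bounds the numerator--denominator mismatch in \eqref{eq_ratio_generic}, so $|\rho_k-1|$ is at most $2\kappaef\Delta_k^2$ divided by the model decrease from (ii); when $m_k$ is $C$-fully linear and $\Delta_k$ is small relative to $\pi^m_k/\kappaef$, this is below $1-\eta$ and the step is successful.

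For convergence I would first prove $\liminf_k\pi^f_k=0$ by contradiction: if $\pi^f_k\geq\epsilon>0$ for all $k$, then estimate (i) and the logic of the criticality step keep $\pi^m_k$ bounded below (of order $\epsilon$) and keep $m_k$ $C$-fully linear whenever $\Delta_k$ is small, so by (iii) the radius cannot shrink indefinitely and is bounded below by some $\Delta_{\min}$ of order $\epsilon/\kappa_d$. Infinitely many successful steps then occur, each decreasing $f$ by a fixed positive amount via (ii), contradicting that $f$ is bounded below by $\flow$. Upgrading to $\lim_k\pi^f_k=0$ uses the standard band-crossing argument: between any iterate with $\pi^f_k\geq\epsilon$ and a later one with $\pi^f_k<\epsilon/2$, the accumulated decrease in $f$ is bounded below in terms of $\epsilon$, so such crossings occur only finitely often.

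For the complexity bound I would run \algref{alg_cdfotr} until the first iterate with $\pi^f_k<\epsilon$. Throughout, $\pi^f_k\geq\epsilon$, so (i) gives $\pi^m_k\geq\epsilon/2$ once $\Delta_k\lesssim\epsilon/\kappaeg$, and (iii) again pins $\Delta_k\geq\Delta_{\min}$ with $\Delta_{\min}$ of order $\epsilon/\kappa_d$. Each successful step then decreases $f$ by at least $\eta c_1(\epsilon/2)\min((\epsilon/2)/\kappa_H,\Delta_{\min},1)$, which is of order $\epsilon^2/(\kappa_H\kappa_d^2)$; since the total decrease is at most $f(\bx_0)-\flow$, the number of successful steps is $\bigO(\kappa_H\kappa_d^2\epsilon^{-2})$. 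The remaining unsuccessful, model-improving, and criticality steps are counted from the geometric dynamics of $\Delta_k$: it increases only on successful steps (by $\gammainc$) and stays in $[\Delta_{\min},\Delta_{\max}]$, so the number of radius-decreasing steps is at most a constant multiple of the successful count plus a logarithmic term, while each model-improving step is immediately followed by a non-model-improving step. Summing yields the stated $\bigO(\kappa_H\kappa_d^2\epsilon^{-2})$.

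The main obstacle is estimate (iii) together with the criticality-step analysis in the \emph{constrained} setting: the Cauchy decrease \eqref{eq_2cdec} and the error bounds \eqref{eq_fl} are expressed through the constrained quantity $\pi^m_k$ and through suprema over $C\cap B(\bx_k,\Delta_k)$ rather than through $\|\bg_k\|$ and full balls, so one must verify carefully that the ``small $\Delta_k$ forces $\rho_k\geq\eta$'' mechanism and the resulting lower bound $\Delta_{\min}$ survive when the feasible region cuts into the trust region. Estimate (i), which cleanly transfers the gradient error bound \eqref{eq_fl_g} into a bound on $|\pi^f_k-\pi^m_k|$, is what makes this transfer possible and is the linchpin of the whole argument.
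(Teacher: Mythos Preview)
The paper does not actually prove \thmref{thm_limpif}; it is stated as a background result and attributed to \cite{LR_ConvexDFO2021} (Theorem~3.10 and Corollary~3.15 there), with no proof given in the present paper. So there is nothing here to compare your proposal against.

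That said, your sketch is essentially the argument one finds in \cite{LR_ConvexDFO2021}: the criticality comparison $|\pi^f_k-\pi^m_k|\leq\kappaeg\Delta_k$ from \eqref{eq_fl_g}, the Cauchy decrease \eqref{eq_2cdec} combined with \assref{ass_boundedhess}, the ``$C$-full linearity and small $\Delta_k$ force $\rho_k\geq\eta$'' mechanism, the $\liminf$-then-$\lim$ structure, and the complexity count via a lower bound $\Delta_{\min}=\Theta(\epsilon/\kappa_d)$ together with the geometric $\Delta_k$ dynamics. Your identification of estimate~(i) as the linchpin matches the role it plays in \cite{LR_ConvexDFO2021}, where the passage from the unconstrained template of \cite{Conn2009a,Garmanjani2016} to the constrained measures $\pi^f_k,\pi^m_k$ is precisely the point. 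The one place to be careful, as you note, is the interaction of the criticality step with the condition $\epsilon_C\geq c_2\epsilon$: you must argue that once $\pi^f_k\geq\epsilon$ and $\Delta_k$ is small enough, the criticality branch is not entered (or if entered, exits after boundedly many sub-iterations), so that $\Delta_k$ is not driven to zero by criticality steps alone.
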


In \cite{LR_ConvexDFO2021}, details are given on how to construct fully linear models based on linear interpolation to feasible points.
Although linear models can be practical for some structured problems, such as nonlinear least-squares objectives (e.g.~\cite{LR_DFOGN2019}), quadratic models are generally preferred.
The remainder of this paper is devoted to the construction of fully linear quadratic models by only sampling the objective at feasible points.

\section{Linear Regression Models} \label{sec_linreg}
We first consider how to extend the linear interpolation approximation theory from \cite{LR_ConvexDFO2021} to the case of linear regression models (with the slight extra generalization that the base point $\bx$ does not need to be an interpolation point).
The purpose of this is twofold: regression models can be useful for modelling noisy objectives (e.g.~\cite{Billups2013}), and this theory will be necessary to develop the corresponding quadratic interpolation theory in \secref{sec_quad_interp}.
In the unconstrained case $C=\R^n$, these results were originally proven in \cite{Conn2008}.

Suppose we have sampled the function $f$ at $p$ points $\{\by_1,\ldots,\by_p\}\subset\R^n$ (where $p\geq n+1$), and given this information we wish to find a linear model
\begin{align}
	f(\by) \approx m(\by) := c + \bg^T (\by-\bx), \label{eq_lin_reg_model}
\end{align}
by solving
\begin{align}
	\min_{c,\bg\in\R\times\R^n} \: \sum_{t=1}^{p} (f(\by_t)-m(\by_t))^2.
\end{align}
Equivalently, we can find the $c$ and $\bg$ for our model by finding the least-squares solution to the $p\times(n+1)$ system
\begin{align}
	M \bmat{ c \\ \bg} := \bmat{1 & (\by_1-\bx)^T \\ \vdots & \vdots \\ 1 & (\by_p-\bx)^T} \bmat{ c \\ \bg} = \bmat{f(\by_1) \\ \vdots \\ f(\by_{p})},  \label{eq_linreg2}
\end{align}
which may be written as 
\begin{align}
	\bmat{ c \\ \bg} = M^{\dagger} \bmat{f(\by_1) \\ \vdots \\ f(\by_p)},
\end{align}
where $M^{\dagger}\in\R^{(n+1)\times p}$ is the Moore-Penrose pseudoinverse of $M$.

Later we will require the following standard properties of $M^{\dagger}$ (see, e.g.~\cite[Section 5.5.4]{Golub1996}).

\begin{lemma} \label{lem_pseudoinverse}
For $p\geq n+1$, the Moore-Penrose pesudoinverse $M^{\dagger}$ of $M$ \eqref{eq_linreg2} satisfies $(M^T)^{\dagger} = (M^{\dagger})^T$ and $M^{\dagger} M M^{\dagger} = M^{\dagger}$. The minimal-norm solution to the underdetermined system $M^T \bu=\bv$ is $\bu=(M^T)^{\dagger} \bv$.
\end{lemma}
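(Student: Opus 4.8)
The plan is to treat all three claims as consequences of the defining (Penrose) conditions of the pseudoinverse, namely that $X=M^\dagger$ is the unique matrix satisfying $MXM=M$, $XMX=X$, $(MX)^T=MX$, and $(XM)^T=XM$. The middle identity $M^\dagger M M^\dagger = M^\dagger$ then requires no work beyond recalling the definition, being exactly the second Penrose condition.

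For the transpose identity $(M^T)^\dagger = (M^\dagger)^T$, I would exploit uniqueness of the pseudoinverse: it suffices to check that $X := (M^\dagger)^T$ satisfies the four Penrose conditions with $M^T$ in place of $M$. Writing $Y := M^\dagger$, each of the four conditions follows by transposing the corresponding condition for $M$; for instance $M^T X M^T = (MYM)^T = M^T$, while the symmetry conditions $(M^T X)^T = M^T X$ and $(X M^T)^T = X M^T$ reduce to the symmetry of $YM$ and $MY$ respectively. Since the four conditions determine the pseudoinverse uniquely, this forces $(M^\dagger)^T = (M^T)^\dagger$.

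For the minimal-norm claim, set $A := M^T$ and let $\bu^\star := A^\dagger \bv = (M^T)^\dagger \bv$. First I would confirm that $\bu^\star$ solves $A\bu=\bv$ whenever the system is consistent: if $\bv = A\bw$ for some $\bw$, then $A\bu^\star = A A^\dagger A \bw = A\bw = \bv$ by the first Penrose condition. (Consistency holds in our setting because the interpolation points make $M$ of full column rank $n+1$, so $A=M^T$ has full row rank.) Any other solution then has the form $\bu = \bu^\star + \bz$ with $\bz\in\ker A$, and the claim follows from a Pythagorean argument once $\bu^\star \perp \ker A$ is established. This orthogonality is the only step needing care: I would read it off the identity $A^\dagger A\,\bu^\star = A^\dagger A A^\dagger \bv = A^\dagger \bv = \bu^\star$ (second Penrose condition), which exhibits $\bu^\star$ as a fixed point of the symmetric idempotent $A^\dagger A$, i.e.\ the orthogonal projector onto $\operatorname{range}(A^T) = (\ker A)^\perp$. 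Hence $\|\bu\|^2 = \|\bu^\star\|^2 + \|\bz\|^2 \geq \|\bu^\star\|^2$, with equality precisely when $\bz=0$.

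The only genuine subtlety is this final orthogonality argument; everything else is a direct transcription of the Penrose conditions, and the entire lemma is standard (cf.\ the reference cited above). An equally clean alternative would be to fix a singular value decomposition $M = U\Sigma V^T$, so that $M^\dagger = V\Sigma^\dagger U^T$, and verify all three claims by direct computation on the diagonal factor $\Sigma$; I would fall back on this route if the Penrose-condition bookkeeping became unwieldy.
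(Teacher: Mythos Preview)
Your proof is correct, but there is nothing to compare it against: the paper does not prove \lemref{lem_pseudoinverse} at all. It is stated as a collection of standard facts with a citation to \cite[Section 5.5.4]{Golub1996}, and no proof environment appears. Your Penrose-condition argument is a perfectly valid way to establish the three claims; the SVD alternative you mention at the end is essentially what the cited reference does, so either route would serve.
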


The quality of the choice of interpolation points will be assessed by considering the associated set of Lagrange polynomials.
In this case, the Lagrange polynomials associated with our interpolation set are the linear functions
\begin{align}
	\ell_t(\by) := c_t + \bg_t^T (\by-\bx), \qquad \forall t=1,\ldots,p,
\end{align}
each defined by the least-squares regression problem
\begin{align}
	\min_{c_t,\bg_t\in\R\times\R^n} \: \sum_{s=1}^{p+1} (\delta_{s,t} - \ell_t(\by_s))^2, \qquad \forall t=1,\ldots,p,
\end{align}
or equivalently
\begin{align}
	\bmat{ c_t \\ \bg_t} = M^{\dagger} \bee_t, \qquad \forall t=1,\ldots,p.
\end{align}
Our notion of the sampled points $\{\by_1,\ldots,\by_{p}\}$ being a `good' choice is given by the Lagrange polynomials having small magnitude in the region of interest.
This is formalized in the following notion, which generalizes \cite[Definition 2.7]{Conn2008} to the convex feasible region $C$.

\begin{definition} \label{def_new_poised_reg}
Given $\Lambda\geq 1$, the set $\{\by_1,\ldots,\by_{p}\} \subset C$ is $\Lambda$-poised for linear regression in $B(\bx,\Delta)\cap C$ if $\{\by_2-\by_1, \ldots, \by_{p}-\by_1\}$ spans $\R^n$ and 
\begin{align}
	\max_{t=1,\ldots,p} |\ell_t(\by)| \leq \Lambda, \qquad \forall \by\in C \cap B(\bx,\min(\Delta,1)). \label{eq_new_poised_reg}
\end{align}
\end{definition}

We note that if $\{\by_1,\ldots,\by_{p}\}$ is $\Lambda$-poised for linear regression, then $M$ has full column rank (since $\{\by_2-\by_1, \ldots, \by_{p}-\by_1\}$ is assumed to span $\R^n$). 
This requirement also necessitates that  $p\geq n+1$. 

Assuming the $\Lambda$-poisedness of $\{\by_1,\ldots,\by_{p}\}$ will be sufficient for the associated linear regression model to be $C$-fully linear.
The proof of this follows a similar structure to \cite[Lemma 4.3 \& Theorem 4.4]{LR_ConvexDFO2021}, but with an increased complexity to the proofs coming from using regression instead of interpolation.

\begin{lemma} \label{lem_fl_reg}
	Suppose $f$ satisfies \assref{ass_smoothness} and $C$ satisfies \assref{ass_feasible_set}.
	Then if $\{\by_1,\ldots,\by_{p}\}$ is $\Lambda$-poised for linear regression in $B(\bx,\Delta)\cap C$ and $\|\by_t-\bx\| \leq \beta \min(\Delta, 1)$ for all $t=1,\ldots,p$ and some $\beta>0$, we have
	\begin{align}
		|m(\by) - f(\bx) - \grad f(\bx)^T (\by-\bx)| \leq \frac{p \Lambda L_{\grad f} \beta^2}{2} \min(\Delta,1)^2, \label{eq_linreg2_tmp2}
	\end{align}
	for all $\by\in B(\bx, \min(\Delta,1))\cap C$.
	If we also have $\bx\in C$, this in turn implies error bounds on $c$ and $\bg$ individually, namely
	\begin{align}
		|c - f(\bx)| \leq \frac{p \Lambda L_{\grad f} \beta^2}{2} \min(\Delta,1)^2, \label{eq_fl_intermediate_v2a}
	\end{align}
	and
	\begin{align}
		|(\by-\bx)^T (\bg-\grad f(\bx))| \leq p \Lambda L_{\grad f} \beta^2 \min(\Delta,1)^2, \label{eq_fl_intermediate_v2}
	\end{align}
	for all $\by\in B(\bx, \min(\Delta,1))\cap C$.
\end{lemma}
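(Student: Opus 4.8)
The plan is to reduce everything to the Lagrange-polynomial representation of the regression model, mirroring the unconstrained interpolation argument but replacing exact interpolation with the two pseudoinverse identities that survive in the regression setting. First I would record the representation $m(\by) = \sum_{t=1}^{p} f(\by_t)\,\ell_t(\by)$, which follows at once from writing $m(\by) = \bmat{1 & (\by-\bx)^T} M^{\dagger} \bmat{f(\by_1) \\ \vdots \\ f(\by_p)}$ and $\ell_t(\by) = \bmat{1 & (\by-\bx)^T} M^{\dagger} \bee_t$ and summing the latter against $f(\by_t)$.

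Next I would establish the key reproduction property: for any affine $q(\by) = a + \bb^T(\by-\bx)$ one has $\sum_{t=1}^p q(\by_t)\,\ell_t(\by) = q(\by)$. The point is that the data vector $\bmat{q(\by_1) & \cdots & q(\by_p)}^T$ equals $M\,\bmat{a \\ \bb}$, so the associated regression coefficients are $M^{\dagger} M \bmat{a \\ \bb}$; since $\Lambda$-poisedness forces $M$ to have full column rank, $M^{\dagger} M = I_{n+1}$ and the regression model recovers $q$ exactly. This is where the genuine content of the lemma lives and where the ``increased complexity'' over interpolation enters: in the interpolation case one simply invokes $\ell_t(\by_s)=\delta_{s,t}$, whereas here the analogous statement is the exact reproduction of the affine model space, justified through the pseudoinverse identities of \lemref{lem_pseudoinverse} together with full column rank. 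I expect this to be the main obstacle only in the bookkeeping sense; it is the step that must be got right.

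With these two identities the error bound \eqref{eq_linreg2_tmp2} is routine. Setting $T(\by) := f(\bx) + \grad f(\bx)^T(\by-\bx)$, reproduction applied to the affine $T$ gives $T(\by) = \sum_t T(\by_t)\,\ell_t(\by)$, whence
\begin{align*}
m(\by) - T(\by) = \sum_{t=1}^p \bigl(f(\by_t) - T(\by_t)\bigr)\,\ell_t(\by).
\end{align*}
I would then bound each summand using the Lipschitz-gradient Taylor estimate $|f(\by_t)-T(\by_t)| \leq \tfrac{1}{2}L_{\grad f}\|\by_t-\bx\|^2 \leq \tfrac{1}{2}L_{\grad f}\beta^2\min(\Delta,1)^2$ and the poisedness bound $|\ell_t(\by)|\leq\Lambda$ valid for $\by\in C\cap B(\bx,\min(\Delta,1))$; summing $p$ terms yields \eqref{eq_linreg2_tmp2}. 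The only care needed is to keep the feasibility restriction $\by\in C$ in force wherever \eqref{eq_new_poised_reg} is invoked.

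Finally, assuming $\bx\in C$, the two component bounds follow by specialization. Evaluating \eqref{eq_linreg2_tmp2} at $\by=\bx$ (legitimate since then $\bx\in C\cap B(\bx,\min(\Delta,1))$) and using $m(\bx)=c$ and $T(\bx)=f(\bx)$ gives \eqref{eq_fl_intermediate_v2a}. For \eqref{eq_fl_intermediate_v2} I would note the identity $m(\by)-T(\by) = (c-f(\bx)) + (\bg-\grad f(\bx))^T(\by-\bx)$, rearrange for the gradient term, and apply the triangle inequality with the two bounds just proven, producing the factor-of-two-larger estimate. No step beyond the reproduction identity is conceptually difficult.
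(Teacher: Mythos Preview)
Your proof is correct and reaches the same key identity as the paper,
\[
m(\by) - T(\by) = \sum_{t=1}^p \bigl(f(\by_t) - T(\by_t)\bigr)\,\ell_t(\by),
\]
but by a somewhat different (and cleaner) route. The paper introduces the regression residual $\br = (MM^{\dagger}-I)\bmat{f(\by_1) \\ \vdots \\ f(\by_p)}$, writes $\bmat{1 \\ \by-\bx} = M^T\balpha(\by)$ with $\balpha(\by)$ the minimal-norm solution, identifies $\alpha_t(\by)=\ell_t(\by)$, and then kills the cross term via $\balpha(\by)^T\br=0$ using the identity $M^{\dagger}MM^{\dagger}=M^{\dagger}$. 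You instead use the affine reproduction property $\sum_t q(\by_t)\ell_t(\by)=q(\by)$, justified by $M^{\dagger}M=I_{n+1}$ for full-column-rank $M$. These are equivalent manipulations of the same pseudoinverse algebra, but your packaging is more direct: the residual never appears explicitly, and the conceptual content (regression exactly fits affine data) is front and center. The paper's version has the minor advantage of making visible exactly which pseudoinverse identity replaces the interpolation condition $\ell_t(\by_s)=\delta_{s,t}$, which motivates the remark about ``increased complexity'' in the surrounding text. One small caveat: the identity $M^{\dagger}M=I_{n+1}$ you invoke is not literally stated in \lemref{lem_pseudoinverse}; it is a standard consequence of full column rank (e.g.\ via $M^{\dagger}=(M^TM)^{-1}M^T$), so you should either state it separately or note that it follows from that formula.
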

\begin{proof}
	We begin by defining the residual of the least-squares problem \eqref{eq_linreg2} as
	\begin{align}
		\br := M \bmat{ c \\ \bg} - \bmat{f(\by_1) \\ \vdots \\ f(\by_{p})} = (M M^{\dagger} - I) \bmat{f(\by_1) \\ \vdots \\ f(\by_{p})} \in \R^{p}.
	\end{align}
	Then for all $t=1,\ldots,p$ have
	\begin{align}
		m(\by_t) - f(\bx) - \grad f(\bx)^T (\by_t-\bx) &= \bmat{c \\ \bg}^T \underbrace{\bmat{1 \\ \by_t-\bx}}_{=M^T\bee_t} - f(\bx) - \grad f(\bx)^T (\by_t-\bx), \\
		&= \br^T \bee_t + f(\by_t) - f(\bx) - \grad f(\bx)^T (\by_t-\bx).
	\end{align}
	
	Now, fix $\by\in B(\bx, \min(\Delta,1))\cap C$.
	Since $M$ has full column rank, its rows span $\R^{n+1}$ and so there exist constants $\alpha_t(\by)$ such that
	\begin{align}
		\bmat{1 \\ \by-\bx} = \sum_{t=1}^{p} \alpha_t(\by) \bmat{1 \\ \by_t-\bx} = M^T \balpha(\by). \label{eq_linreg2_alpha}
	\end{align}
	Of these, we take the $\balpha(\by)$ with minimal norm (c.f.~\lemref{lem_pseudoinverse}),
	\begin{align}
		\balpha(\by) = (M^T)^{\dagger} \bmat{1 \\ \by-\bx}. 
	\end{align}
	There are two key properties of $\balpha(\by)$: firstly,
	\begin{align}
		\ell_t(\by) = \bmat{c_t \\ \bg_t}^T \bmat{1 \\ \by-\bx} = \bee_t^T (M^{\dagger})^T \bmat{1 \\ \by-\bx} = \alpha_t(\by), \label{eq_linreg2_alpha2}
	\end{align}
	where the last equality follows  $(M^T)^{\dagger} = (M^{\dagger})^T$ (\lemref{lem_pseudoinverse}).
	Hence $|\alpha_t(\by)| = |\ell_t(\by)| \leq \Lambda$ from the $\Lambda$-poisedness condition.
	Secondly, we have
	\begin{align}
		\balpha(\by)^T \br = \bmat{1 \\ \by-\bx}^T M^{\dagger} (M M^{\dagger} - I) \bmat{f(\by_1) \\ \vdots \\ f(\by_{p})} = 0, \label{eq_linreg2_tmp1}
	\end{align}
	from $M^{\dagger} M M^{\dagger} = M^{\dagger}$ (\lemref{lem_pseudoinverse}).
	All together, we have
	\begin{align}
		|m(\by) - f(\bx) - \grad f(\bx)^T (\by-\bx)| &= \left|\bmat{c \\ \bg}^T \bmat{1 \\ \by-\bx} - \bmat{f(\bx) \\ \grad f(\bx)}^T \bmat{1 \\ \by-\bx}\right|, \\
		&= \left|\sum_{t=1}^{p} \alpha_t(\by) \left(\bmat{c \\ \bg}^T \bmat{1 \\ \by_t-\bx} - \bmat{f(\bx) \\ \grad f(\bx)}^T \bmat{1 \\ \by_t-\bx}\right)\right|, \\
		&= \left|\sum_{t=1}^{p} \alpha_t(\by) \left\{\br^T \bee_t + f(\by_t) - f(\bx) - \grad f(\bx)^T (\by_t-\bx)\right\}\right|, \\
		&\leq \left|\balpha(\by)^T \br\right| + \frac{L_{\grad f}}{2}\sum_{t=1}^{p} |\alpha_t(\by)| \cdot \|\by_t-\bx\|^2, \\
		&\leq \frac{p \Lambda L_{\grad f} \beta^2}{2} \min(\Delta,1)^2,
	\end{align}
	and we recover \eqref{eq_linreg2_tmp2}, where we used \eqref{eq_linreg2_tmp1}, $|\alpha_t(\by)| \leq \Lambda$ and $\|\by_t-\bx\| \leq \beta \min(\Delta,1)$ to get the last inequality.
 
	If $\bx\in C$, we can take  $\by=\bx$ in \eqref{eq_linreg2_tmp2} to get \eqref{eq_fl_intermediate_v2a}. 
	Combining \eqref{eq_fl_intermediate_v2a} with \eqref{eq_linreg2_tmp2} we get
	\begin{align}
		|(\by-\bx)^T (\bg-\grad f(\bx))| &\leq |c + \bg^T(\by-\bx) - f(\bx) - \grad f(\bx)^T (\by-\bx)| + |c - f(\bx)|, \\
		&\leq p \Lambda L_{\grad f} \beta^2 \min(\Delta,1)^2,
	\end{align}
	and we get \eqref{eq_fl_intermediate_v2}.
\end{proof}

\begin{theorem} \label{thm_fl_new_v2}
	Suppose the assumptions of \lemref{lem_fl_reg} hold and $\bx\in C$.
	Then the regression model \eqref{eq_lin_reg_model} is $C$-fully linear in $B(\bx,\Delta)$ with constants
	\begin{align}
		\kappaef = p \Lambda L_{\grad f} \beta^2 + \frac{L_{\grad f}}{2}, \qquad \text{and} \qquad \kappaeg = p \Lambda L_{\grad f} \beta^2,
	\end{align}
	in \eqref{eq_fl}.
\end{theorem}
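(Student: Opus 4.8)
The plan is to verify the two inequalities of $C$-full linearity in \defref{def_fl} directly from the intermediate bounds \eqref{eq_linreg2_tmp2}, \eqref{eq_fl_intermediate_v2a} and \eqref{eq_fl_intermediate_v2} already established in \lemref{lem_fl_reg}, handling the gradient bound \eqref{eq_fl_g} and the function-value bound \eqref{eq_fl_f} separately and splitting on whether $\Delta\le 1$ or $\Delta>1$. The recurring difficulty is that all of the \lemref{lem_fl_reg} estimates are only valid inside $B(\bx,\min(\Delta,1))\cap C$ (this is where $\Lambda$-poisedness controls the Lagrange polynomials), whereas \defref{def_fl} demands control over $\|\bd\|\le 1$ in \eqref{eq_fl_g} and over $\|\bd\|\le\Delta$ in \eqref{eq_fl_f}, which can reach outside that ball. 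Convexity of $C$ together with $\bx\in C$ is the tool I would use to bridge this mismatch.

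For the gradient bound \eqref{eq_fl_g}, I would fix $\bd$ with $\|\bd\|\le 1$ and $\bx+\bd\in C$. When $\Delta\ge 1$ we have $\min(\Delta,1)=1$, so $\by=\bx+\bd$ already lies in $B(\bx,\min(\Delta,1))\cap C$ and \eqref{eq_fl_intermediate_v2} gives $|(\grad f(\bx)-\bg)^T\bd|\le p\Lambda L_{\grad f}\beta^2\le\kappaeg\Delta$. When $\Delta<1$ the point $\bx+\bd$ may be infeasible for the estimate, so I would apply \eqref{eq_fl_intermediate_v2} instead at the scaled point $\bx+\Delta\bd$, which lies in $B(\bx,\Delta)\cap C$ because it is the convex combination $(1-\Delta)\bx+\Delta(\bx+\bd)$ of two points of $C$; dividing the resulting bound $p\Lambda L_{\grad f}\beta^2\Delta^2$ by $\Delta$ recovers $|(\grad f(\bx)-\bg)^T\bd|\le\kappaeg\Delta$. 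This gives \eqref{eq_fl_g} with $\kappaeg=p\Lambda L_{\grad f}\beta^2$.

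For the function-value bound \eqref{eq_fl_f}, fix $\by=\bx+\bd$ with $\|\bd\|\le\Delta$ and $\bx+\bd\in C$. If $\|\bd\|\le\min(\Delta,1)$ the point is inside the valid ball, and I would simply combine \eqref{eq_linreg2_tmp2} with the Taylor estimate $|f(\by)-f(\bx)-\grad f(\bx)^T(\by-\bx)|\le\tfrac{L_{\grad f}}{2}\|\bd\|^2$ (from the Lipschitz gradient in \assref{ass_smoothness}) to obtain $|f(\by)-m(\by)|\le(\tfrac{p\Lambda L_{\grad f}\beta^2}{2}+\tfrac{L_{\grad f}}{2})\min(\Delta,1)^2\le\kappaef\Delta^2$.

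The main obstacle is the remaining regime $1<\|\bd\|\le\Delta$, which occurs only when $\Delta>1$ and where \eqref{eq_linreg2_tmp2} is unavailable since $\by$ lies outside $B(\bx,1)$; the model error must then be propagated outward from the unit ball. I would set $\bu=\bd/\|\bd\|$, note that $\bz=\bx+\bu\in C$ by convexity, bound $|f(\bz)-m(\bz)|\le\tfrac{L_{\grad f}}{2}+\tfrac{p\Lambda L_{\grad f}\beta^2}{2}$ exactly as in the previous paragraph, and then use the linearity of $m$ with the fundamental theorem of calculus along $s\mapsto\bx+s\bu$ to write $f(\by)-m(\by)=(f(\bz)-m(\bz))+\int_1^{\|\bd\|}(\grad f(\bx+s\bu)-\bg)^T\bu\,ds$. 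Bounding the integrand by $L_{\grad f}s+p\Lambda L_{\grad f}\beta^2$ (Lipschitz continuity of $\grad f$ for the first part, the unit-direction estimate $|(\grad f(\bx)-\bg)^T\bu|\le p\Lambda L_{\grad f}\beta^2$ from \eqref{eq_fl_intermediate_v2} for the second) and integrating yields $|f(\by)-m(\by)|\le\tfrac{L_{\grad f}}{2}\|\bd\|^2+p\Lambda L_{\grad f}\beta^2\|\bd\|-\tfrac{p\Lambda L_{\grad f}\beta^2}{2}$; since $\Delta-\tfrac12\le\Delta^2$ for every $\Delta$ this is at most $\kappaef\Delta^2$. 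I expect this extrapolation step to be the crux: a naive estimate such as $|(\grad f(\bx)-\bg)^T\bd|\le p\Lambda L_{\grad f}\beta^2\|\bd\|$ used in a term-by-term split loses an additive factor and fails to reach the stated constant for $\Delta$ just above $1$, so routing the outer part through the integral is what pins down $\kappaef=p\Lambda L_{\grad f}\beta^2+\tfrac{L_{\grad f}}{2}$ exactly.
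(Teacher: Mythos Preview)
Your proof is correct. The gradient bound \eqref{eq_fl_g} is handled exactly as in the paper: the same $\Delta\ge 1$ vs.\ $\Delta<1$ split and the same scaling $\bx+\Delta\bd$ via convexity.

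For the function-value bound \eqref{eq_fl_f}, however, you take a genuinely different route in the regime $1<\|\bd\|\le\Delta$. The paper scales $\by$ down by the factor $1/\Delta$ to $\hat\by=\bx+\Delta^{-1}(\by-\bx)\in B(\bx,1)\cap C$, applies \eqref{eq_linreg2_tmp2} at $\hat\by$, and then recovers a bound on $|m(\by)-f(\bx)-\grad f(\bx)^T(\by-\bx)|$ by a purely algebraic manipulation that separates out the $|c-f(\bx)|$ contribution via \eqref{eq_fl_intermediate_v2a}. You instead project to the unit sphere along the ray (scaling by $1/\|\bd\|$), bound $|f(\bz)-m(\bz)|$ there, and propagate outward by the fundamental theorem of calculus, controlling the integrand through Lipschitz continuity of $\grad f$ and the directional estimate \eqref{eq_fl_intermediate_v2}. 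Your approach is arguably more transparent---the integral makes explicit how the error grows along the ray---while the paper's algebraic argument avoids calculus entirely and uses only \eqref{eq_linreg2_tmp2} and \eqref{eq_fl_intermediate_v2a}, never touching \eqref{eq_fl_intermediate_v2} for the $\kappaef$ part. Both arrive at the same constant $\kappaef=p\Lambda L_{\grad f}\beta^2+\tfrac{L_{\grad f}}{2}$.
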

\begin{proof}
	Fix $\by\in B(\bx,\Delta)\cap C$.
	We first derive $\kappaef$ by considering the cases $\Delta> 1$ and $\Delta\leq 1$ separately.
	
	If $\Delta> 1$, then $\hat{\by} = \bx + \frac{1}{\Delta}(\by-\bx) \in B(\bx,1) = B(\bx,\min(\Delta,1))$.
	Since $C$ is convex and $\bx,\by\in C$ we also have $\hat{\by}\in C$.
	Hence \eqref{eq_linreg2_tmp2} gives
	\begin{align}
		|c + \bg^T(\hat{\by}-\bx) - f(\bx) - \grad f(\bx)^T (\hat{\by}-\bx)| &\leq \frac{p \Lambda L_{\grad f} \beta^2}{2} \min(\Delta,1)^2,
	\end{align}
	and so
	\begin{align}
		\left|c + \frac{1}{\Delta}\bg^T(\by-\bx) - f(\bx) - \frac{1}{\Delta}\grad f(\bx)^T (\by-\bx)\right| &\leq \frac{p \Lambda L_{\grad f} \beta^2}{2} \min(\Delta,1)^2.
	\end{align}
	This gives us
	\begin{align}
		|f(\by) - m(\by)| &\leq |f(\by) - f(\bx) - \grad f(\bx)^T (\by-\bx)| + |m(\by) - f(\bx) - \grad f(\bx)^T (\by-\bx)|, \\
		&\leq \frac{L_{\grad f}}{2}\|\by-\bx\|^2 + \Delta \left|\frac{1}{\Delta}c + \frac{1}{\Delta}\bg^T(\by-\bx) - \frac{1}{\Delta}f(\bx) - \frac{1}{\Delta}\grad f(\bx)^T (\by-\bx)\right|, \\
		&\leq \frac{L_{\grad f}}{2}\|\by-\bx\|^2 + \Delta\left[\left|c + \frac{1}{\Delta}\bg^T(\by-\bx) - f(\bx) - \frac{1}{\Delta}\grad f(\bx)^T (\by-\bx)\right|\right. \nonumber \\
		&\qquad\qquad\qquad\qquad\qquad\qquad \left. + \left|\left(\frac{1}{\Delta}-1\right)(c - f(\bx))\right|\right], \\
		&\leq \frac{L_{\grad f}}{2}\Delta^2 + \Delta\left[\frac{p \Lambda L_{\grad f} \beta^2}{2} \min(\Delta,1)^2\right. \nonumber \\
		&\qquad\qquad\qquad\qquad\qquad\qquad \left. + \left(1-\frac{1}{\Delta}\right)\frac{p \Lambda L_{\grad f} \beta^2}{2} \min(\Delta,1)^2\right], \\
		&\leq \frac{L_{\grad f}}{2}\Delta^2 + p \Lambda L_{\grad f} \beta^2 \Delta \min(\Delta,1)^2, \\
		&= \frac{L_{\grad f}}{2}\Delta^2 + p \Lambda L_{\grad f} \beta^2 \Delta^2,
	\end{align}
	where we use \eqref{eq_fl_intermediate_v2a} to get the fourth inequality, and the last line follows from $\Delta > 1$.
	Instead if $\Delta\leq 1$, then $\by\in B(\bx,\min(\Delta,1))\cap C$ already, and so \eqref{eq_linreg2_tmp2} immediately gives
	\begin{align}
		|f(\by) - m(\by)| &= |f(\by) - f(\bx) - \grad f(\bx)^T (\by-\bx)| + \left|m(\by) - f(\bx) - \grad f(\bx)^T (\by-\bx)\right|, \\
		&\leq \frac{L_{\grad f}}{2}\Delta^2 + \frac{p \Lambda L_{\grad f} \beta^2}{2} \min(\Delta,1)^2, \\
		&\leq \frac{L_{\grad f}}{2}\Delta^2 + \frac{p \Lambda L_{\grad f} \beta^2}{2} \Delta^2.
	\end{align}
	Either way, we get the desired value of $\kappaef$.
	
	To get $\kappaeg$ we now fix an arbitrary $\t{\by}\in B(\bx,1)\cap C$ and again consider the cases $\Delta\geq 1$ and $\Delta<1$ separately.
	First, if $\Delta\geq 1$, then $\t{\by} \in B(\bx,\min(\Delta,1))\cap C$.
	From \eqref{eq_fl_intermediate_v2} we get
	\begin{align}
		|(\t{\by}-\bx)^T (\bg-\grad f(\bx))| \leq p \Lambda L_{\grad f} \beta^2 \min(\Delta,1)^2 \leq p \Lambda L_{\grad f} \beta^2 \Delta,
	\end{align}
	where the second inequality follows from $\Delta \geq 1$.
	Alternatively, if $\Delta<1$ then the convexity of $C$ implies that $\hat{\by}\defeq \bx+\Delta(\t{\by}-\bx)\in B(\bx,\Delta)\cap C = B(\bx,\min(\Delta,1))\cap C$.
	Again from \eqref{eq_fl_intermediate_v2} we get
	\begin{align}
		|(\t{\by}-\bx)^T (\bg-\grad f(\bx))| &= \frac{1}{\Delta}|(\hat{\by}-\bx)^T (\bg-\grad f(\bx))|, \\
		&\leq p \Lambda L_{\grad f} \beta^2 \Delta^{-1} \min(\Delta,1)^2, \\
		&= p \Lambda L_{\grad f} \beta^2 \Delta,
	\end{align}
	where the last line follows from $\Delta<1$.
	Again, either way we get the desired value of $\kappaeg$.
\end{proof}

\section{Underdetermined Quadratic Interpolation Models} \label{sec_quad_interp}
We now consider the case of forming $C$-fully linear quadratic interpolation models.
Our approach follows that of \cite{Powell2004} for the unconstrained case, although the fully linear error bounds for this approach were shown later in \cite{Conn2009}.
We note that this approach is different to the underdetermined quadratic interpolation used in \cite{Conn2008}.

Here, we aim to construct a quadratic model
\begin{align}
	f(\by) \approx m(\by) := c + \bg^T (\by-\bx) + \frac{1}{2}(\by-\bx)^T H (\by-\bx), \label{eq_quad_model}
\end{align}
where $c\in\R$, $\bg\in\R^n$ and $H\in\R^{n\times n}$ with $H=H^T$.
We assume that our interpolation set is $\{\by_1,\ldots,\by_p\}$ with $p\in\{n+2,\ldots,(n+1)(n+2)/2\}$.
The case $p=(n+1)(n+2)/2$ corresponds to full quadratic interpolation \cite[Section 4.2]{Conn2007}.
We exclude the case $p=n+1$ which, from \eqref{eq_min_frob} below, corresponds to linear interpolation and was analyzed (in the convex-constrained case) in \cite[Section 4]{LR_ConvexDFO2021}.

If $n+2 \leq p<(n+1)(n+2)/2$ then there are infinitely many models satisfying the interpolation conditions $f(\by_t)=m(\by_t)$ for all $t=1,\ldots,p$.
So, following \cite{Powell2004} we choose the model with minimum Frobenius norm Hessian by solving
\begin{subequations} \label{eq_min_frob}
\begin{align}
	\min_{c, \bg, H\in\R\times\R^n\times\R^{n\times n}} &\:\: \frac{1}{4} \|H\|_F^2, \\
	\text{s.t.} \quad & f(\by_t) = m(\by_t), \qquad \forall t=1,\ldots,p. \label{eq_quad_interp_constraints}
\end{align}
\end{subequations}
This is a convex quadratic program, and (as shown in \cite{Powell2004}), reduces to solving the $(p+n+1)\times(p+n+1)$ linear system
\begin{align}
    F \left[\begin{array}{c} \lambda_1 \\ \vdots \\ \lambda_p \\ \hline c \\ \bg \end{array}\right] := \left[\begin{array}{c|c} Q & M \\ \hline M^T & 0 \end{array}\right] \left[\begin{array}{c} \lambda_1 \\ \vdots \\ \lambda_p \\ \hline c \\ \bg \end{array}\right] = \left[\begin{array}{c} f(\by_1) \\ \vdots \\ f(\by_p) \\ \hline 0 \\ \b{0} \end{array}\right] \in \R^{p+n+1}, \label{eq_quad_interp_system}
\end{align}
where $M\in\R^{p\times(n+1)}$ is from the linear regression problem \eqref{eq_linreg2} and $Q\in\R^{p\times p}$ has entries $Q_{i,j} = \frac{1}{2}[(\by_i-\bx)^T(\by_j-\bx)]^2$ for $i,j=1,\ldots,p$.
The solution to \eqref{eq_quad_interp_system} immediately gives us $c$ and $\bg$; the (symmetric) model Hessian is given by $H = \sum_{t=1}^{p} \lambda_t (\by_t-\bx)(\by_t-\bx)^T$.
We note that $Q$ is symmetric positive semidefinite \cite[eq.~2.10]{Powell2004}, and $\lambda_1,\ldots,\lambda_p$ are the Lagrange multipliers associated with constraints \eqref{eq_quad_interp_constraints}.

The Lagrange polynomials associated with our interpolation set are
\begin{align}
	\ell_t(\by) := c_t + \bg_t^T(\by-\bx) + \frac{1}{2}(\by-\bx)^T H_t (\by-\bx), \qquad \forall t=1,\ldots,p,
\end{align}
where $c_t$, $\bg_t$ and $H_t$ come from solving \eqref{eq_min_frob} with \eqref{eq_quad_interp_constraints} replaced by $m(\by_s) = \delta_{s,t}$ for $s=1,\ldots,p$.
Equivalently, we have
\begin{align}
	F \bmat{\blambda_t \\ c_t \\ \bg_t} = \bee_t, \qquad \forall t=1,\ldots,p, \label{eq_quad_lagrange_system}
\end{align}
and $H_t = \sum_{s=1}^{p} [\blambda_t]_s (\by_s-\bx)(\by_s-\bx)^T$.
This gives
\begin{align}
	\ell_t(\by) &= c_t + \bg_t(\by-\bx) + \frac{1}{2}\sum_{s=1}^{p} [\blambda_t]_s\left[(\by-\bx)^T (\by_s-\bx)\right]^2, \\
	&= \bmat{\blambda_t \\ c_t \\ \bg_t}^T \underbrace{\bmat{\{\frac{1}{2}\left[(\by-\bx)^T (\by_s-\bx)\right]^2\}_{s=1,\ldots,p} \\ 1 \\ \by-\bx}}_{=: \bphi(\by)}, \\
	&= \bee_t^T F^{-1} \bphi(\by). \label{eq_min_frob_lagrange}
\end{align}
Given \eqref{eq_quad_interp_system} and \eqref{eq_quad_lagrange_system}, we conclude that 
\begin{align}
	\bmat{\lambda_1 \\ \vdots \\ \lambda_p} = \sum_{t=1}^{p} f(\by_t) \blambda_t, \qquad c = \sum_{t=1}^{p} f(\by_t) c_t, \quad \text{and} \quad \bg = \sum_{t=1}^{p} f(\by_t) \bg_t,
\end{align}
which gives
\begin{align}
	m(\by) = \sum_{t=1}^{p} f(\by_t) \ell_t(\by). \label{eq_mfn_frob_lin_comb}
\end{align}
We now have our new definition of $\Lambda$-poisedness:

\begin{definition} \label{def_new_poised_quad}
Given $\Lambda\geq 1$, the interpolation set $\{\by_1,\ldots,\by_p\} \subset C$ is $\Lambda$-poised for minimum Frobenius norm interpolation in $B(\bx,\Delta)\cap C$ if $F$ is invertible, and 
\begin{align}
	\max_{t=1,\ldots,p} |\ell_t(\by)| \leq \Lambda, \qquad \forall \by\in C \cap B(\bx,\min(\Delta,1)). \label{eq_new_poised_quad}
\end{align}
\end{definition}

\begin{lemma} \label{lem_quad_is_linreg}
	If the set $\{\by_1,\ldots,\by_p\} \subset C$ is $\Lambda$-poised for minimum Frobenius norm interpolation, then it is $\sqrt{p}\,\Lambda$-poised for linear regression.
\end{lemma}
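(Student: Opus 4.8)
The plan is to compare the two families of Lagrange polynomials directly, exploiting that the regression coefficients are, by construction, the \emph{minimal-norm} solution of the very same linear system that the Frobenius Lagrange values also satisfy. Writing $\ell_t^{\mathrm{reg}}$ for the linear-regression Lagrange polynomials and $\ell_t$ for the minimum Frobenius norm ones, two things must be shown to match \defref{def_new_poised_reg}: the spanning condition on $\{\by_2-\by_1,\ldots,\by_p-\by_1\}$, and the pointwise bound $\max_t |\ell_t^{\mathrm{reg}}(\by)| \le \sqrt{p}\,\Lambda$ for all $\by\in C\cap B(\bx,\min(\Delta,1))$.

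For the spanning condition I would argue by contradiction from the invertibility of $F$. If $M$ did not have full column rank there would be a nonzero $\bz\in\R^{n+1}$ with $M\bz=\b{0}$; then, reading off the block structure of $F$ in \eqref{eq_quad_interp_system}, the vector $\bmat{\b{0}\\\bz}\in\R^{p+n+1}$ satisfies $F\bmat{\b{0}\\\bz}=\bmat{M\bz\\M^T\b{0}}=\b{0}$, contradicting invertibility. Full column rank of $M$ is precisely the statement that $\{\by_2-\by_1,\ldots,\by_p-\by_1\}$ spans $\R^n$, giving the spanning requirement.

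The core of the argument is the pointwise bound. First I would record that minimum Frobenius norm interpolation reproduces affine functions exactly: substituting $f(\by)=a+\bb^T(\by-\bx)$ into \eqref{eq_quad_interp_system} shows that $\bmat{\blambda\\c\\\bg}=\bmat{\b{0}\\a\\\bb}$ solves the system (so $H=0$ and $m\equiv f$), since $M\bmat{a\\\bb}=\bmat{f(\by_t)}_t$ and $M^T\b{0}=\b{0}$. Combining $m\equiv f$ with the representation $m(\by)=\sum_{t} f(\by_t)\ell_t(\by)$ from \eqref{eq_mfn_frob_lin_comb}, and taking $f\equiv 1$ and then each $f(\by)=(\by-\bx)_i$, yields $\sum_{t}\ell_t(\by)\bmat{1\\\by_t-\bx}=\bmat{1\\\by-\bx}$, i.e.\ $M^T L(\by)=\bmat{1\\\by-\bx}$ where $L(\by):=(\ell_1(\by),\ldots,\ell_p(\by))^T$. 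On the regression side, the proof of \lemref{lem_fl_reg} already identifies the regression Lagrange values with $\balpha(\by)=(M^T)^{\dagger}\bmat{1\\\by-\bx}$, which by \lemref{lem_pseudoinverse} is exactly the minimal-norm solution of $M^T\bu=\bmat{1\\\by-\bx}$.

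Since $L(\by)$ is \emph{some} solution of that system while $\balpha(\by)$ is the minimal-norm one, we get $\|\balpha(\by)\|\le\|L(\by)\|$, and hence for each $t$,
\[
	|\ell_t^{\mathrm{reg}}(\by)| = |\alpha_t(\by)| \le \|\balpha(\by)\| \le \|L(\by)\| \le \sqrt{p}\,\max_{s}|\ell_s(\by)| \le \sqrt{p}\,\Lambda,
\]
on $C\cap B(\bx,\min(\Delta,1))$, as required. I expect the main obstacle to be recognizing and cleanly justifying that both coefficient vectors solve the identical underdetermined system $M^T\bu=\bmat{1\\\by-\bx}$; once this is in place, the $\sqrt{p}$ factor is merely the passage from the $\ell_\infty$ to the $\ell_2$ norm, and the minimal-norm characterization of $\balpha(\by)$ does the rest. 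The only other point needing care is establishing affine reproduction from the linear system rather than simply citing it.
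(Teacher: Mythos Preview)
Your proposal is correct and follows essentially the same route as the paper: establish that $M$ has full column rank from the invertibility of $F$, use affine reproduction to show $M^T L(\by)=\bmat{1\\\by-\bx}$, and then compare with the minimal-norm solution $\balpha(\by)=\bell^{\mathrm{reg}}(\by)$ to get the $\sqrt{p}$ factor via the $\ell_2$--$\ell_\infty$ norm inequality. The only cosmetic differences are that the paper cites an external result for the rank of $M$ where you give the direct null-space argument, and the paper justifies affine reproduction via the optimization formulation (noting $H=0$ is a minimizer of $\|H\|_F^2$) rather than by direct substitution into the linear system as you do; both variants are equally valid.
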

\begin{proof}
        Since $F$ is invertible, the sub-matrix $M$ has full column rank by \cite[Theorem 3.3]{Benzi2005}, and so $\{\by_2-\by_1,\ldots,\by_p-\by_1\}$ spans $\R^n$.
        The remainder of this proof is based on the argument in \cite[p.~83]{Conn2009}.
	We note that since $H=0$ is a global minimizer of the objective function \eqref{eq_min_frob}, that if $f$ is linear then we have exact interpolation, $m=f$.
	Applying this to the functions $f(\by)=1$ and $f(\by)=(\by-\bx)^T \bee_i$ for $i=1,\ldots,n$, we get from \eqref{eq_mfn_frob_lin_comb} that
	\begin{align}
		1 = \sum_{t=1}^{p} \ell_t(\by), \qquad (\by-\bx)^T \bee_i = \sum_{t=1}^{p} (\by_t-\bx)^T \bee_i \cdot \ell_t(\by), \quad \forall i=1,\ldots,n.
	\end{align}
	Denoting $\bell(\by)\in\R^p$ as the vector of all $\ell_1(\by),\ldots,\ell_p(\by)$, these are equivalent to
	\begin{align}
		M^T \bell(\by) = \bmat{1 \\ \by-\bx}.
	\end{align}
	Hence $\bell(\by)$ is another solution to the (underdetermined) system \eqref{eq_linreg2_alpha}.
	Since the minimal norm solution to \eqref{eq_linreg2_alpha} was $\balpha(\by)$, we must have $\|\balpha(\by)\| \leq \|\bell(\by)\|$.
	However from \eqref{eq_linreg2_alpha2} we know $\balpha(\by) = \bell^{\text{reg}}(\by)$, where $\bell^{\text{reg}}(\by)$ are the Lagrange polynomials associated with linear regression for $\{\by_1,\ldots,\by_p\}$.
	Thus $\|\bell^{\text{reg}}(\by)\| \leq \|\bell(\by)\|$.
	Now, fixing any $\by \in B(\bx,\min(\Delta,1))\cap C$ we get
	\begin{align}
		\|\bell^{\text{reg}}(\by)\|_{\infty} \leq \|\bell^{\text{reg}}(\by)\| \leq \|\bell(\by)\| \leq \sqrt{p}\,\|\bell(\by)\|_{\infty} \leq \sqrt{p}\,\Lambda,
	\end{align}
	and we are done.
\end{proof}

We are now in a position to construct our fully linear error bounds.
We will begin by proving a bound on the size of the model Hessian, which requires the following technical result.

\begin{lemma} \label{lem_ineq_technical_general}
    Fix $t>1$ and $c_1,c_2\geq 0$.
    If $a,b\in\R$ satisfy $|a+b| \leq c_1$ and $|ta+b| \leq c_2$ then $|a| \leq (c_1+c_2)/(t-1)$ and $|b| \leq (tc_1+c_2)/(t-1)$.
\end{lemma}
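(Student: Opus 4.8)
The plan is to treat the two hypotheses as a pair of linear equations in the unknowns $a$ and $b$, invert this $2\times 2$ system explicitly, and then apply the triangle inequality to the resulting closed-form expressions. To keep the bookkeeping clean I would abbreviate $u := a+b$ and $v := ta+b$, so that the hypotheses read simply $|u| \leq c_1$ and $|v| \leq c_2$.

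First I would isolate $a$ by subtracting the first relation from the second: since $v - u = (ta+b)-(a+b) = (t-1)a$ and the hypothesis $t>1$ guarantees $t-1>0$, I may divide to obtain $a = (v-u)/(t-1)$. The triangle inequality then yields $|a| \leq (|v|+|u|)/(t-1) \leq (c_1+c_2)/(t-1)$, which is the first claimed bound.

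Next I would isolate $b$ by the complementary combination that cancels $a$, namely $tu - v = t(a+b)-(ta+b) = (t-1)b$; dividing by $t-1$ gives $b = (tu-v)/(t-1)$, whence $|b| \leq (t|u|+|v|)/(t-1) \leq (tc_1+c_2)/(t-1)$, the second claimed bound.

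There is no genuine obstacle in this argument: it is pure elementary algebra followed by the triangle inequality. The only point requiring any care is that the divisions by $t-1$ are legitimate, which is precisely what the hypothesis $t>1$ provides. The nonnegativity of $c_1,c_2$ is not used in the derivation and serves only to make the stated upper bounds nonnegative (hence meaningful as magnitude bounds).
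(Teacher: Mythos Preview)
Your proof is correct and in fact cleaner than the paper's. The paper proceeds by contradiction in four separate cases: assuming $a > (c_1+c_2)/(t-1)$, then $a < -(c_1+c_2)/(t-1)$, then $b > (tc_1+c_2)/(t-1)$, then $b < -(tc_1+c_2)/(t-1)$, and in each case chaining the two hypotheses to contradict one of the bounds $|a+b|\leq c_1$ or $|ta+b|\leq c_2$. Your approach instead inverts the $2\times 2$ system directly to write $a=(v-u)/(t-1)$ and $b=(tu-v)/(t-1)$, after which a single triangle inequality suffices for each bound. The direct inversion is shorter, avoids case analysis, and makes transparent exactly where the factor $t-1$ in the denominator comes from (it is the determinant of the system); the paper's case-by-case argument obscures this structure but requires no ``solving'' step. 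Your observation that the nonnegativity of $c_1,c_2$ is not actually used is also correct.
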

\begin{proof}
    We first prove the bound on $|a|$.
    To find a contradiction, first suppose that $a > (c_1+c_2)/(t-1)$ and so $a>0$.
    Since $a+b \geq -c_1$ we have $b\geq -c_1-a$, which means $ta+b \geq (t-1)a-c_1 > c_2$, contradicting $|ta+b| \leq c_2$.
    Instead, suppose that $a < -(c_1+c_2)/(t-1)$ and so $a<0$.
    Then $a+b \leq c_1$ means $b \leq c_1-a$ and so $ta+b \leq (t-1)a + c_1 < -c_2$, again contradicting $|ta+b| \leq c_2$.

    The bound on $|b|$ follows from similar reasoning.
    First suppose that $b > (tc_1+c_2)/(t-1)$ and so $b>0$.
    Since $a+b \leq c_1$ we have $a\leq c_1-b$ and so $ta+b \leq tc_1 - (t-1)b < -c_2$, contradicting $|ta+b| \leq c_2$.
    Instead suppose that $b< -(tc_1+c_2)/(t-1)$ and so $b<0$.
    Then since $a+b \geq -c_1$ we have $a \geq -c_1-b$ and so $ta+b \geq -tc_1 - (t-1)b > c_2$, again contradicting $|ta+b| \leq c_2$.
\end{proof}

We can now give our bound on the model Hessian.
In the existing (unconstrained) theory, this is presented as a bound on $\|H\|$ \cite[Theorem 5.7]{Conn2009}, but here we only need to consider specific Rayleigh-type quotients.

\begin{lemma} \label{lem_hess_bd}
	Suppose $f$ and $C$ satisfy Assumptions~\ref{ass_smoothness} and \ref{ass_feasible_set} respectively.
	Then if $\bx\in C$, $\{\by_1,\ldots,\by_p\}$ is $\Lambda$-poised for underdetermined quadratic interpolation in $B(\bx,\Delta)\cap C$ and $\|\by_t-\bx\| \leq \beta \min(\Delta, 1)$ for all $t=1,\ldots,p$ and some $\beta>0$, then the model $m$ generated by \eqref{eq_min_frob} has Hessian $H$ satisfying
	\begin{align}
		\max_{s,t=1,\ldots,p} \frac{|(\by_{s}-\bx)^T H (\by_{t}-\bx)|}{\beta^2 \min(\Delta,1)^2} \leq \kappa_H := L_{\grad f} p \left[8\Lambda\beta^2 + 36\Lambda\beta + 58\Lambda + 6\right].
	\end{align}
\end{lemma}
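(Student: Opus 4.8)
The plan is to reduce the claim to a bound on the Hessians of the Lagrange polynomials and then to use convexity so that those polynomials are only ever evaluated at feasible points inside $B(\bx,\min(\Delta,1))$. Write $\bd_s:=\by_s-\bx$. First I would exploit that the minimum Frobenius construction reproduces linear functions exactly. In the proof of \lemref{lem_quad_is_linreg} it is shown that $\sum_{t=1}^p\ell_t\equiv 1$ and $\sum_{t=1}^p(\bd_t^T\bee_i)\,\ell_t(\by)\equiv(\by-\bx)^T\bee_i$; differentiating each identity twice gives $\sum_t H_t=0$ and $\sum_t(\bd_t^T\bee_i)H_t=0$, where $H_t$ is the Hessian of $\ell_t$. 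Hence the model Hessian $H=\sum_t f(\by_t)H_t$ is unchanged if $f$ is replaced by $\tilde f(\by):=f(\by)-f(\bx)-\grad f(\bx)^T(\by-\bx)$, so $H=\sum_t\tilde f(\by_t)H_t$ and, by \assref{ass_smoothness}, $|\tilde f(\by_t)|\leq\tfrac{L_{\grad f}}{2}\|\bd_t\|^2\leq\tfrac{L_{\grad f}}{2}\beta^2\min(\Delta,1)^2$. This reduces the target to a bound on the bilinear forms of the Lagrange Hessians, via
\begin{align}
	\frac{|\bd_s^T H\bd_{s'}|}{\beta^2\min(\Delta,1)^2}\leq\frac{p\,L_{\grad f}}{2}\max_{i,j,t}|\bd_i^T H_t\bd_j|. \nonumber
\end{align}

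It remains to bound $|\bd_i^T H_t\bd_j|$ using only $\Lambda$-poisedness, i.e.\ $|\ell_t(\by)|\leq\Lambda$ on $C\cap B(\bx,\min(\Delta,1))$. The crux is that the points at which one would naturally read off this quantity (such as $\bx+\bd_i+\bd_j$) need be neither feasible nor inside the ball, so the $\Lambda$-bound is unavailable there. I would instead only evaluate $\ell_t$ at convex combinations of $\bx,\by_i,\by_j$: for $\theta,\sigma\geq 0$ with $\theta+\sigma\leq 1$ the point $\bx+\theta\bd_i+\sigma\bd_j$ lies in $C$, and it lies in $B(\bx,\min(\Delta,1))$ whenever $(\theta+\sigma)\beta\leq 1$. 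Taking the common step $s:=\tfrac{1}{2\max(1,\beta)}$ keeps the four points $\bx$, $\bx+s\bd_i$, $\bx+s\bd_j$, $\bx+s\bd_i+s\bd_j$ feasible and in the ball, and a short computation shows their mixed second difference equals $s^2\,\bd_i^T H_t\bd_j$ (the constant and linear parts of $\ell_t$ cancel). The four-term triangle inequality then gives $|\bd_i^T H_t\bd_j|\leq 4\Lambda/s^2=16\Lambda\max(1,\beta)^2$, with the diagonal case $i=j$ handled identically using $\bx,\bx+s\bd_i,\bx+2s\bd_i$.

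Assembling, $\max_{i,j,t}|\bd_i^T H_t\bd_j|\leq 16\Lambda\max(1,\beta)^2$ and, after simplifying $\max(1,\beta)^2\leq\beta^2+1$, the displayed reduction yields a bound of the stated form. I expect the only real obstacle to be the middle step: in the unconstrained setting one simply invokes $\Lambda$-poisedness on a full ball at whichever points are convenient, whereas here every evaluation must be pulled back to a feasible in-ball point by convexity, and the rescaling factor $\tfrac{1}{\max(1,\beta)}$ is precisely what injects the powers of $\beta$ into the constant. A slightly less sharp route, reading two scaled evaluations along each ray and separating the constant, linear and quadratic parts of $\ell_t$ with \lemref{lem_ineq_technical_general} (and bounding the diagonal model-Hessian forms directly from the interpolation conditions, which contributes a Lipschitz term not carrying $\Lambda$), appears to be what produces the explicit constants $8\Lambda\beta^2+36\Lambda\beta+58\Lambda+6$ in the statement. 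The remaining manipulations are routine.
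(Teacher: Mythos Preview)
Your argument is correct and in fact cleaner than the paper's. Both proofs make the same reduction to bounding $|\bd_i^T H_u\bd_j|$ for the Lagrange Hessians (using that the minimum Frobenius model reproduces linear functions, so $H=\sum_u \tilde f(\by_u)H_u$). The difference is in how that bilinear form is extracted. The paper evaluates $\ell_u$ at $\bx$, at the interpolation nodes $\by_s,\by_t$ themselves (where $\ell_u\in\{0,1\}$, so $|\ell_u|\leq 1$), and at the rescaled points $\hat\by_s=\bx+\hat\beta^{-1}\bd_s$, $\hat\by_t=\bx+\hat\beta^{-1}\bd_t$ with $\hat\beta=\max(\beta,2)$; it then applies \lemref{lem_ineq_technical_general} along each ray to separate the linear term $\bg_u^T\bd_s$ from the quadratic term $\tfrac12\bd_s^TH_u\bd_s$, and finally uses the midpoint $\hat\by_{s,t}=\bx+\tfrac12(\hat\by_s-\bx)+\tfrac12(\hat\by_t-\bx)$ to isolate the cross term. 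Your four-point mixed second difference at $\bx,\bx+s\bd_i,\bx+s\bd_j,\bx+s\bd_i+s\bd_j$ bypasses all of this: the constant and linear parts cancel identically, so no separation lemma is needed, and the cross term drops out directly. Your route also yields the sharper constant $8\Lambda\max(1,\beta)^2\leq 8\Lambda\beta^2+8\Lambda$ in place of the paper's $8\Lambda\beta^2+36\Lambda\beta+58\Lambda+6$; since the lemma asserts only an upper bound, this of course still proves the statement. What the paper's route buys is an explicit use of the interpolation property $\ell_u(\by_s)\in\{0,1\}$ and a motivation for \lemref{lem_ineq_technical_general}; what yours buys is brevity, a tighter constant, and no auxiliary lemma.
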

\begin{proof}
    First, fix $u\in\{1,\ldots,p\}$ and consider the associated Lagrange polynomial
    \begin{align}
        \ell_u(\by) = c_u + \bg_u^T (\by-\bx) + \frac{1}{2}(\by-\bx)^T H_u (\by-\bx).
    \end{align}
    Additionally, fix $s,t\in\{1,\ldots,p\}$, and we will first provide a bound on 
    \begin{align}
        |(\by_s-\bx)^T H_u (\by_t-\bx)|.
    \end{align}
    To do this, we consider the value of $\ell_u$ at five different points: $\bx$, $\by_s$, $\by_t$, plus
    \begin{align}
        \hat{\by}_s := \bx + \frac{1}{\hat{\beta}}(\by_s-\bx), \qquad \text{and} \qquad \hat{\by}_t := \bx + \frac{1}{\hat{\beta}}(\by_t-\bx),
    \end{align}
    where $\hat{\beta}:=\max(\beta,2)$.
    Since $\|\by_s-\bx\| \leq \beta\min(\Delta,1)$ we have $\|\hat{\by}_s-\bx\| \leq \frac{\beta}{\hat{\beta}} \min(\Delta,1) \leq \min(\Delta,1)$, and $\hat{\by}_s\in C$ by convexity, since $\bx,\by_s\in C$.
    Similarly, we also have $\hat{\by}_t\in B(\bx,\min(\Delta,1))\cap C$.
    For these points, from the definition of $\Lambda$-poisedness we know $|\ell_u(\bx)|, |\ell_u(\hat{\by}_s)|, |\ell_u(\hat{\by}_t)| \leq \Lambda$ and by definition of Lagrange polynomials we have $\ell_u(\by_s),\ell_u(\by_t)\in\{0,1\}$ and so $|\ell_u(\by_s)|, |\ell_u(\by_t)| \leq 1$.

    From $|\ell_u(\bx)| \leq \Lambda$ we have $|c_u| \leq \Lambda$, and so $|\ell_u(\hat{\by}_s)| \leq \Lambda$ implies
    \begin{align}
	\Lambda &\geq \left|c_u + \bg_u^T(\hat{\by}_s-\bx) + \frac{1}{2}(\hat{\by}_s-\bx)^T H_u (\hat{\by}_s-\bx)\right|, \\
	&= \left|c_u + \frac{1}{\hat{\beta}}\bg_u^T (\by_s-\bx) + \frac{1}{2\hat{\beta}^2} (\by_s-\bx)^T H_u (\by_s-\bx)\right|, \\
	&\geq \frac{1}{\hat{\beta}^2}\left|\hat{\beta}\bg_u^T (\by_s-\bx) + \frac{1}{2} (\by_s-\bx)^T H_u (\by_s-\bx)\right| - |c_u|,
\end{align}
    where the last line follows from the reverse triangle inequality.
    Together with $|\ell_u(\by_s)| \leq 1$, we get
    \begin{align}
        \left|\bg_u^T (\by_s-\bx) + \frac{1}{2}(\by_s-\bx)^T H_u (\by_s-\bx)\right| &\leq 1 + |c_u| \leq \Lambda+1, \\
        \left|\hat{\beta}\bg_u^T (\by_s-\bx) + \frac{1}{2} (\by_s-\bx)^T H_u (\by_s-\bx)\right| &\leq 2\Lambda \hat{\beta}^2.
    \end{align}
    Since $\hat{\beta}>1$ by definition, applying \lemref{lem_ineq_technical_general} we conclude
    \begin{align}
        \left|\bg_u^T (\by_s-\bx)\right| &\leq \frac{\Lambda+1+2\Lambda\hat{\beta}^2}{\hat{\beta}-1}, \label{eq_hess_bdd_tmp1} \\
        \frac{1}{2}\left|(\by_s-\bx)^T H_u (\by_s-\bx)\right| &\leq \frac{\hat{\beta}(\Lambda+1)+2\Lambda\hat{\beta}}{\hat{\beta}-1}. \label{eq_hess_bdd_tmp2}
    \end{align}
    Since $s$ was arbitrary, the same inequalities hold with $\by_s$ replaced by  $\by_t$.

    Now, consider the point
    \begin{align}
        \hat{\by}_{s,t} := \bx + \frac{1}{2}(\hat{\by}_s-\bx) + \frac{1}{2}(\hat{\by}_t-\bx).
    \end{align}
    Since $\bx,\hat{\by}_s,\hat{\by}_t\in C$, we have $\hat{\by}_{s,t}\in C$, and also
    \begin{align}
        \|\hat{\by}_{s,t}-\bx\| \leq \frac{1}{2}\|\hat{\by}_s-\bx\| + \frac{1}{2}\|\hat{\by}_t-\bx\| \leq \min(\Delta,1),
    \end{align}
    and so $|\ell_u(\hat{\by}_{s,t})| \leq \Lambda$.
    Written in full, this is
    \begin{align}
        &\left|c_u + \frac{1}{2\hat{\beta}} \bg_u^T (\by_s-\bx) + \frac{1}{2\hat{\beta}} \bg_u^T (\by_t-\bx)\right. \nonumber \\
        &\qquad\qquad \left. + \frac{1}{2}\left(\frac{1}{2\hat{\beta}}(\by_s-\bx) + \frac{1}{2\hat{\beta}}(\by_t-\bx)\right)^T H_u \left(\frac{1}{2\hat{\beta}}(\by_s-\bx) + \frac{1}{2\hat{\beta}}(\by_t-\bx)\right)\right| \leq \Lambda.
    \end{align}
    That is,
    \begin{align}
        \frac{1}{4\hat{\beta}^2}|(\by_s-\bx)^T H_u (\by_t-\bx)| &\leq \Lambda + |c_u| + \frac{1}{2\hat{\beta}}\left(|\bg_u^T (\by_s-\bx)| + |\bg_u^T (\by_t-\bx)|\right) \nonumber \\
        &\qquad\qquad + \frac{1}{8\hat{\beta}^2} \left(|(\by_s-\bx)^T H_u (\by_s-\bx)| + |(\by_t-\bx)^T H_u (\by_t-\bx)|\right).
    \end{align}
    Applying $|c_u| \leq \Lambda$, \eqref{eq_hess_bdd_tmp1} and \eqref{eq_hess_bdd_tmp2}, we conclude
    \begin{align}
        \frac{1}{4\hat{\beta}^2}|(\by_s-\bx)^T H_u (\by_t-\bx)| &\leq 2\Lambda + \frac{\Lambda+1+2\Lambda\hat{\beta}^2}{\hat{\beta}(\hat{\beta}-1)} + \frac{\hat{\beta}(\Lambda+1)+2\Lambda\hat{\beta}}{2\hat{\beta}^2(\hat{\beta}-1)},
    \end{align}
    or
    \begin{align}
        |(\by_s-\bx)^T H_u (\by_t-\bx)| &\leq \tilde{\kappa},
    \end{align}
    for all $s,t,u=1,\ldots,p$, where
    \begin{align}
        \tilde{\kappa} := 8\Lambda \hat{\beta}^2 + \frac{4\Lambda\hat{\beta}+4\hat{\beta}+8\Lambda\hat{\beta}^3}{\hat{\beta}-1} + \frac{2\hat{\beta}(\Lambda+1)+4\Lambda\hat{\beta}}{\hat{\beta}-1} = 8\Lambda \hat{\beta}^2 + \frac{8\Lambda\hat{\beta}^3 + 10\Lambda\hat{\beta} + 6\hat{\beta}}{\hat{\beta}-1}.
    \end{align}
    More simply, we have
    \begin{align}
        \tilde{\kappa} &= 8\Lambda\hat{\beta}^2 + \frac{8\Lambda(\hat{\beta}-1)^3 + 24\Lambda(\hat{\beta}-1)^2 + (34\Lambda+6)(\hat{\beta}-1) + 18\Lambda+6}{\hat{\beta}-1},\\
        &\leq 8\Lambda (\beta+2)^2 + 8\Lambda (\beta+1)^2 + 24\Lambda(\beta+1) + 34\Lambda + 6 + 18\Lambda + 6, \\
        &= 16\Lambda\beta^2 + 72\Lambda\beta + 116\Lambda + 12, \label{eq_tilde_kappaH_bound}
    \end{align}
    using $2\leq \hat{\beta}=\max(\beta,2) \leq \beta+2$ to get the inequality.
    Now let us turn our attention to the model $m$ \eqref{eq_min_frob}.
    Note that we may add/subtract a linear function to $f(\by)$ without changing $H$ (since this just changes $c$ and $\bg$ in \eqref{eq_min_frob}).
    So, we consider the model $\t{m}$ generated by interpolation to $\t{f}(\by) := f(\by) - f(\bx) - \grad f(\bx)^T (\by-\bx)$.
    From \eqref{eq_mfn_frob_lin_comb} we may write
    \begin{align}
        H = \t{H} = \sum_{u=1}^{p} \t{f}(\by_u) H_u,
    \end{align}
    where $H_u$ are the Hessians of the Lagrange polynomials.
    From \assref{ass_smoothness}, we also have $|\t{f}(\by)| \leq \frac{L_{\grad f}}{2} \|\by-\bx\|^2$.
    Then for any $s,t=1,\ldots,p$ we conclude
    \begin{align}
	|(\by_s-\bx)^T H (\by_t-\bx)| &\leq \sum_{u=1}^{p} |\t{f}(\by_u)| \cdot |(\by_s-\bx)^T H_u (\by_t-\bx)|, \\
	&\leq \sum_{u=1}^{p} \frac{L_{\grad f}}{2} \tilde{\kappa} \|\by_u-\bx\|^2, \\
        &\leq \frac{L_{\grad f}}{2} \tilde{\kappa} p \beta^2 \min(\Delta,1)^2.
    \end{align}
    and we are done after applying \eqref{eq_tilde_kappaH_bound}.
\end{proof}

\begin{remark} \label{rem_kappaH_compare}
    In the unconstrained case, \cite[Theorem 5.7]{Conn2009} gives the bound
    \begin{align}
        \|H\| \leq 4(p+1) \sqrt{\frac{(n+1)(n+2)}{2}} \: L_{\grad f} \Lambda \max(1, \Delta_{\max}^2),
    \end{align}
    where $\Delta_{\max}$ is an upper bound for $\Delta$.
    This result assumes $\|\by_t-\bx\| \leq \Delta$, and so to match our assumptions it requires $\|\by_t-\bx\| \leq \beta\min(\Delta,1)$ where $\beta=\Delta_{\max}$.
    Thus we see that \lemref{lem_hess_bd} improves on \cite[Theorem 5.7]{Conn2009} by a factor of $\bigO(n)$.
\end{remark}

Next, we can prove an analogous result to \lemref{lem_fl_reg}.

\begin{lemma} \label{lem_fl_intermediate_quad_new}
    Suppose the assumptions of \lemref{lem_hess_bd} are satisfied.
    Then the model $m$ generated by \eqref{eq_min_frob} satisfies
    \begin{align}
        \left|c + \bg^T(\by-\bx) - f(\bx) - \grad f(\bx)^T(\by-\bx)\right| \leq \frac{1}{2} p^{3/2} \Lambda (L_{\grad f} + \kappa_H) \beta^2 \min(\Delta,1)^2, \label{eq_min_frob_fl_part1}
    \end{align}
    for all $\by\in B(\bx,\min(\Delta,1))\cap C$.
    Furthermore if $\bx\in C$, we have
    \begin{align}
        |c - f(\bx)| \leq \frac{1}{2} p^{3/2} \Lambda (L_{\grad f} + \kappa_H) \beta^2 \min(\Delta,1)^2, \label{eq_min_frob_fl_part2}
    \end{align}
    and
    \begin{align}
        |(\bg-\grad f(\bx))^T (\by-\bx)| \leq p^{3/2} \Lambda (L_{\grad f} + \kappa_H) \beta^2 \min(\Delta,1)^2, \label{eq_min_frob_fl_part3}
    \end{align}
    for all $\by\in B(\bx,\min(\Delta,1))\cap C$.
\end{lemma}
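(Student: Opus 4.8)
The plan is to mirror the structure of \lemref{lem_fl_reg}, with the model Hessian entering only as a correction term that is controlled by \lemref{lem_hess_bd}. The starting point is that, by \lemref{lem_quad_is_linreg}, the set $\{\by_1,\ldots,\by_p\}$ is $\sqrt{p}\,\Lambda$-poised for linear regression, so for a fixed $\by\in B(\bx,\min(\Delta,1))\cap C$ I would take the minimal-norm regression weights $\balpha(\by) = (M^T)^{\dagger}\bmat{1 \\ \by-\bx}$ from the proof of \lemref{lem_fl_reg}. These satisfy the two expansion identities $\sum_{t=1}^{p}\alpha_t(\by) = 1$ and $\sum_{t=1}^{p}\alpha_t(\by)(\by_t-\bx) = \by-\bx$ (i.e.\ \eqref{eq_linreg2_alpha}), and by \eqref{eq_linreg2_alpha2} together with \lemref{lem_quad_is_linreg} they obey $|\alpha_t(\by)| \leq \sqrt{p}\,\Lambda$ for every $t$.

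The central step uses the interpolation conditions to remove the Hessian from play except at the nodes. Since $m(\by_t)=f(\by_t)$, the linear part of the model at each node satisfies $c+\bg^T(\by_t-\bx) = f(\by_t) - \tfrac{1}{2}(\by_t-\bx)^T H (\by_t-\bx)$. Multiplying by $\alpha_t(\by)$, summing, and applying the two expansion identities lifts this to a general $\by$ and yields
\[
c + \bg^T(\by-\bx) - f(\bx) - \grad f(\bx)^T(\by-\bx) = \sum_{t=1}^{p} \alpha_t(\by)\Big[f(\by_t)-f(\bx)-\grad f(\bx)^T(\by_t-\bx) - \tfrac{1}{2}(\by_t-\bx)^T H (\by_t-\bx)\Big].
\]
I would then bound the bracketed quantity term by term: the first three terms form a first-order Taylor remainder, bounded by $\tfrac{L_{\grad f}}{2}\|\by_t-\bx\|^2\leq\tfrac{L_{\grad f}}{2}\beta^2\min(\Delta,1)^2$ under \assref{ass_smoothness}, while the last term is a \emph{diagonal} Hessian quadratic form, bounded by $\tfrac{1}{2}\kappa_H\beta^2\min(\Delta,1)^2$ using the $s=t$ case of \lemref{lem_hess_bd}. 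Combining these with $|\alpha_t(\by)|\leq\sqrt{p}\,\Lambda$ and summing over the $p$ nodes produces exactly \eqref{eq_min_frob_fl_part1}.

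The two remaining bounds then follow immediately. Since $\bx\in C$, taking $\by=\bx$ in the identity collapses the left-hand side to $|c-f(\bx)|$ and gives \eqref{eq_min_frob_fl_part2}. Writing $(\bg-\grad f(\bx))^T(\by-\bx) = [c+\bg^T(\by-\bx)-f(\bx)-\grad f(\bx)^T(\by-\bx)] - [c-f(\bx)]$ and applying the triangle inequality with \eqref{eq_min_frob_fl_part1} and \eqref{eq_min_frob_fl_part2} gives \eqref{eq_min_frob_fl_part3}, the two factors of $\tfrac{1}{2}$ combining into the full coefficient.

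The point requiring care --- and the reason the stated constant is as clean as $p^{3/2}\Lambda$ rather than $p^2\Lambda^2$ --- is to expand $\by-\bx$ only \emph{once} via the regression weights and to evaluate the Hessian through the interpolation conditions, so that the Hessian form appears only at the diagonal directions $(\by_t-\bx)^T H (\by_t-\bx)$. A more naive route, expanding both copies of $\by-\bx$ in $(\by-\bx)^T H (\by-\bx)$ and summing all $p^2$ entries controlled by \lemref{lem_hess_bd}, would cost an extra factor of $\sqrt{p}\,\Lambda$; recognising that the interpolation identity sidesteps this is the main obstacle, and it is what makes the proof match \lemref{lem_fl_reg} almost line for line.
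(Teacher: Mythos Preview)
Your proposal is correct and follows essentially the same approach as the paper: both use \lemref{lem_quad_is_linreg} to obtain the regression weights $\alpha_t(\by)$ with $|\alpha_t(\by)|\leq\sqrt{p}\,\Lambda$, expand $c+\bg^T(\by-\bx)-f(\bx)-\grad f(\bx)^T(\by-\bx)$ through these weights, invoke the interpolation conditions $m(\by_t)=f(\by_t)$ so that only the diagonal Hessian terms $(\by_t-\bx)^T H(\by_t-\bx)$ appear, and then apply \assref{ass_smoothness} and \lemref{lem_hess_bd} termwise. Your closing remark about why only the diagonal Hessian terms are needed (avoiding the extra $\sqrt{p}\,\Lambda$ factor) is a nice articulation of the key insight.
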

\begin{proof}
    Fix $\by\in B(\bx,\min(\Delta,1))\cap C$.
    From \lemref{lem_quad_is_linreg}, our interpolation set is $\sqrt{p}\,\Lambda$-poised for linear regression.
	Therefore there exist constants $\{\alpha_t(\by)\}_{t=1}^{p}$ such that
	\begin{align}
		\bmat{1 \\ \by-\bx} = \sum_{t=1}^{p} \alpha_t(\by) \bmat{1 \\ \by_t-\bx}, \label{eq_min_frob_span}
	\end{align}
	where $|\alpha_t(\by)| = |\ell_t^{\text{reg}}(\by)| \leq \sqrt{p}\,\Lambda$ (provided the minimal norm solution is taken as in \eqref{eq_linreg2_alpha}).
    Thus we have
    \begin{align}
        &\left|c + \bg^T(\by-\bx) - f(\bx) - \grad f(\bx)^T(\by-\bx)\right| \nonumber \\
        &\qquad\qquad = \left|\bmat{c - f(\bx) \\ \bg - \grad f(\bx)}^T \bmat{1 \\ \by-\bx}\right|, \\
        &\qquad\qquad \leq \sum_{t=1}^{p} \sqrt{p}\:\Lambda \left|\bmat{c-f(\bx) \\ \bg-\grad f(\bx)}^T \bmat{1 \\ \by_t-\bx}\right|, \\
        &\qquad\qquad = \sqrt{p}\:\Lambda \sum_{t=1}^{p} \left|m(\by_t) - f(\bx) - \grad f(\bx)^T (\by_t-\bx) - \frac{1}{2}(\by_t-\bx)^T H (\by_t-\bx)\right|, \\
        &\qquad\qquad \leq \sqrt{p}\:\Lambda \sum_{t=1}^{p} \left[\left|f(\by_t) - f(\bx) - \grad f(\bx)^T (\by_t-\bx)\right| + \frac{1}{2}\left|(\by_t-\bx)^T H (\by_t-\bx)\right|\right], \\
        &\qquad\qquad \leq p^{3/2} \Lambda \left(\frac{L_{\grad f}}{2} \beta^2 \min(\Delta,1)^2 + \frac{\kappa_H}{2} \beta^2 \min(\Delta,1)^2\right),
    \end{align}
    using \assref{ass_smoothness} and \lemref{lem_hess_bd} in the last line, and we have \eqref{eq_min_frob_fl_part1}.
    To get \eqref{eq_min_frob_fl_part2}, we just take $\by=\bx$ in \eqref{eq_min_frob_fl_part1}.
    Finally, to get \eqref{eq_min_frob_fl_part3}, we use 
    \begin{align}
        |(\bg-\grad f(\bx))^T (\by-\bx)| \leq |c + \bg^T (\by-\bx) - f(\bx) - \grad f(\bx)^T (\by-\bx)| + |c - f(\bx)|,
    \end{align}
    together with \eqref{eq_min_frob_fl_part1} and \eqref{eq_min_frob_fl_part2}.
\end{proof}

Finally, we can give our fully linear error bounds for underdetermined quadratic interpolation models.

\begin{theorem} \label{thm_fl_quad}
  Suppose $f$ and $C$ satisfy Assumptions~\ref{ass_smoothness} and \ref{ass_feasible_set} respectively.
  Then if $\bx\in C$, $\{\by_1,\ldots,\by_p\}$ is $\Lambda$-poised for underdetermined quadratic interpolation in $B(\bx,\Delta)\cap C$ and $\|\by_t-\bx\| \leq \beta \min(\Delta, 1)$ for all $t=1,\ldots,p$ and some $\beta>0$, then the model $m$ generated by \eqref{eq_min_frob} is fully linear in $B(\bx,\Delta)$ (in the sense of \defref{def_fl}) with constants
  \begin{align}
      \kappaef = \frac{1}{2}L_{\grad f} + \frac{3}{2} p^{3/2} \Lambda (L_{\grad f}+\kappa_H) \beta^2 + \frac{1}{2}p \Lambda^2 \kappa_H \beta^2, \quad \text{and} \quad \kappaeg = p^{3/2} \Lambda (L_{\grad f} + \kappa_H) \beta^2, \label{eq_fl_quad_constants}
  \end{align}
  in \eqref{eq_fl}, where $\kappa_H$ is defined in \lemref{lem_hess_bd}.
\end{theorem}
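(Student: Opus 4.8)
The plan is to establish the two requirements of \defref{def_fl} separately, following the template of \thmref{thm_fl_new_v2} but now carrying the quadratic term of the model through the argument. The gradient estimate \eqref{eq_fl_g} is the more straightforward of the two, and I would prove it exactly as the $\kappaeg$ bound in \thmref{thm_fl_new_v2}: fix $\t{\by}\in B(\bx,1)\cap C$ and split into $\Delta\geq 1$ and $\Delta<1$. When $\Delta\geq 1$, $\t{\by}$ already lies in $B(\bx,\min(\Delta,1))\cap C$ and \eqref{eq_min_frob_fl_part3} of \lemref{lem_fl_intermediate_quad_new} applies directly; when $\Delta<1$, I would apply \eqref{eq_min_frob_fl_part3} to the scaled point $\hat{\by}:=\bx+\Delta(\t{\by}-\bx)\in B(\bx,\min(\Delta,1))\cap C$ (feasible by convexity of $C$) and divide by $\Delta$. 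With \eqref{eq_min_frob_fl_part3} playing the role of \eqref{eq_fl_intermediate_v2}, both cases give $\kappaeg=p^{3/2}\Lambda(L_{\grad f}+\kappa_H)\beta^2$.

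For the function-value estimate \eqref{eq_fl_f}, the idea is to write, for $\by\in B(\bx,\Delta)\cap C$,
\begin{align*}
    f(\by)-m(\by) &= \big[f(\by)-f(\bx)-\grad f(\bx)^T(\by-\bx)\big] \\
    &\quad - \big[c+\bg^T(\by-\bx)-f(\bx)-\grad f(\bx)^T(\by-\bx)\big] - \tfrac{1}{2}(\by-\bx)^T H(\by-\bx).
\end{align*}
The first bracket is the Taylor remainder, bounded by $\tfrac12 L_{\grad f}\|\by-\bx\|^2$ under \assref{ass_smoothness}; the second is controlled by \eqref{eq_min_frob_fl_part1}. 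The genuinely new ingredient is the Hessian term. To handle it I would use \lemref{lem_quad_is_linreg} (the set is $\sqrt{p}\,\Lambda$-poised for linear regression) to represent, for $\by\in B(\bx,\min(\Delta,1))\cap C$,
\begin{align*}
    \by-\bx = \sum_{t=1}^{p}\alpha_t(\by)(\by_t-\bx), \qquad |\alpha_t(\by)|\leq\sqrt{p}\,\Lambda,
\end{align*}
taking the minimal-norm $\balpha(\by)$ as in \eqref{eq_linreg2_alpha}, so that
\begin{align*}
    (\by-\bx)^T H(\by-\bx) = \sum_{s,t=1}^{p}\alpha_s(\by)\,\alpha_t(\by)\,(\by_s-\bx)^T H(\by_t-\bx),
\end{align*}
and then bound each entry by \lemref{lem_hess_bd}. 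This is where the factor $\kappa_H$ and the two extra powers of $\Lambda$ enter the constant.

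As in \thmref{thm_fl_new_v2}, I would then split on $\Delta$. For $\Delta\leq 1$, every $\by\in B(\bx,\Delta)\cap C$ already lies in $B(\bx,\min(\Delta,1))\cap C$, so the three bounds apply verbatim and combine into a $\Delta^2$ estimate. For $\Delta>1$, I would pass to $\hat{\by}:=\bx+\tfrac{1}{\Delta}(\by-\bx)\in B(\bx,1)\cap C$; the subtlety relative to the linear case is that the quadratic model does not scale affinely, so the linear part and the Hessian form must be rescaled separately. The linear part $c+\bg^T(\cdot-\bx)$ produces the bound \eqref{eq_min_frob_fl_part1} scaled by $\Delta$ together with a correction controlled by \eqref{eq_min_frob_fl_part2} scaled by $\Delta-1$ (the analogue of the $(\tfrac1\Delta-1)(c-f(\bx))$ term in \thmref{thm_fl_new_v2}), while the Hessian form obeys $(\by-\bx)^T H(\by-\bx)=\Delta^2(\hat{\by}-\bx)^T H(\hat{\by}-\bx)$, to which the span/entry bound above is applied at $\hat{\by}$. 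Collecting the three contributions and using $\Delta-1\leq\Delta\leq\Delta^2$ with $\min(\Delta,1)=1$ yields the stated $\kappaef$.

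I expect the main obstacle to be the Hessian quadratic-form bound and its interaction with the $\Delta>1$ rescaling. Unlike \thmref{thm_fl_new_v2}, where the model has no curvature, here $(\by-\bx)^T H(\by-\bx)$ at an arbitrary feasible $\by$ can only be reached indirectly, via the regression coefficients $\balpha(\by)$ of \lemref{lem_quad_is_linreg} combined with the entrywise Rayleigh bounds of \lemref{lem_hess_bd} (note \lemref{lem_hess_bd} deliberately avoids bounding $\|H\|$, so the operator-norm shortcut is unavailable). The careful point is the bookkeeping that turns the $p$ summands and the coefficient bound $\|\balpha(\by)\|\leq\sqrt{p}\,\Lambda$ into the $p\Lambda^2\kappa_H\beta^2$ term of $\kappaef$ rather than a larger power of $p$, and then tracking this Hessian contribution correctly through the $\Delta>1$ scaling; these are the only essentially new elements beyond the regression argument.
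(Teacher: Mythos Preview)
Your proposal is correct and follows essentially the same route as the paper: the $\kappaeg$ argument is identical, and for $\kappaef$ both you and the paper split $f(\by)-m(\by)$ into the Taylor remainder, the linear part controlled by \lemref{lem_fl_intermediate_quad_new}, and the Hessian quadratic form bounded via the regression span representation from \lemref{lem_quad_is_linreg} together with the entrywise Rayleigh bound of \lemref{lem_hess_bd}, then treat $\Delta\ge 1$ and $\Delta<1$ separately via the scaled point $\hat{\by}$. The only cosmetic difference is that the paper decomposes the linear part further into $|c-f(\bx)|+|(\bg-\grad f(\bx))^T(\by-\bx)|$ and applies \eqref{eq_min_frob_fl_part2}--\eqref{eq_min_frob_fl_part3} directly, which sidesteps the $(\tfrac{1}{\Delta}-1)(c-f(\bx))$ correction you carry over from \thmref{thm_fl_new_v2}; this gives the $\tfrac{3}{2}$ coefficient in $\kappaef$ a bit more transparently but is otherwise equivalent.
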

\begin{proof}
    First fix $\by\in B(\bx,\Delta)\cap C$ and we will derive $\kappaef$ by considering the cases $\Delta\geq 1$ and $\Delta<1$ separately.
    If $\Delta\geq 1$ then $\hat{\by} := \bx+\Delta^{-1}(\by-\bx)$ is in $B(\bx,1)=B(\bx,\min(\Delta,1))$ and $\hat{\by}\in C$ since $C$ is convex.
    We then apply \lemref{lem_fl_intermediate_quad_new} to get
    \begin{align}
        |(\by-\bx)^T (\bg-\grad f(\bx))| &= \Delta |(\hat{\by}-\bx)^T (\bg-\grad f(\bx))|, \\
        &\leq p^{3/2} \Lambda (L_{\grad f}+\kappa_H) \beta^2 \Delta \min(\Delta,1)^2, \\
        &\leq p^{3/2} \Lambda (L_{\grad f}+\kappa_H) \beta^2 \Delta^2, \label{eq_poised_quad_tmp1}
    \end{align}
    where the last inequality follows from $\min(\Delta,1)^2 = 1\leq \Delta$.
    Also, from \eqref{eq_min_frob_span} we have $\hat{\by}-\bx=\sum_{t=1}^{p} \alpha_t(\hat{\by}) (\by_t-\bx)$ with $|\alpha_t(\by)| \leq \sqrt{p}\,\Lambda$, and so from \lemref{lem_hess_bd},
    \begin{align}
        |(\by-\bx)^T H (\by-\bx)| &= \Delta^2 |(\hat{\by}-\bx)^T H (\hat{\by}-\bx)|, \\
        &\leq \Delta^2 \sum_{s,t=1}^{p} \left|\alpha_s(\hat{\by}) \alpha_t(\hat{\by}) (\by_s-\bx)^T H (\by_t-\bx)\right|, \\
        &\leq p \Lambda^2 \kappa_H \beta^2 \Delta^2 \min(\Delta,1)^2, \\
        &= p \Lambda^2 \kappa_H \beta^2 \Delta^2, \label{eq_poised_quad_tmp1a}
    \end{align}
    again using $\Delta\geq 1$ in the last line.
    
    Instead, if $\Delta<1$ then $\by\in B(\bx,\min(\Delta,1))\cap C$ and we apply \lemref{lem_fl_intermediate_quad_new} directly to get
    \begin{align}
        |(\by-\bx)^T (\bg-\grad f(\bx))| &\leq \frac{1}{2} p^{3/2} \Lambda (L_{\grad f}+\kappa_H) \beta^2 \min(\Delta,1)^2, \\
        &= p^{3/2} \Lambda (L_{\grad f}+\kappa_H) \beta^2 \Delta^2. \label{eq_poised_quad_tmp2}
    \end{align}
    Also, using \lemref{lem_hess_bd} we have
    \begin{align}
        |(\by-\bx)^T H (\by-\bx)| &\leq \sum_{t=1}^{p} \left|\alpha_s(\by) \alpha_t(\by) (\by_s-\bx)^T H (\by_t-\bx)\right|, \\
        &\leq p \Lambda^2 \kappa_H \beta^2 \min(\Delta,1)^2, \\
        &= p \Lambda^2 \kappa_H \beta^2 \Delta^2. \label{eq_poised_quad_tmp2a}
    \end{align}
    In either case, we use \eqref{eq_poised_quad_tmp1} and \eqref{eq_poised_quad_tmp1a}, or \eqref{eq_poised_quad_tmp2} and \eqref{eq_poised_quad_tmp2a}, with \assref{ass_smoothness} and \lemref{lem_fl_intermediate_quad_new} to get
    \begin{align}
        |f(\by)-m(\by)| &\leq |f(\by)-f(\bx) - \grad f(\bx)^T (\by-\bx)| \nonumber \\
        &\qquad + |c + \bg^T (\by-\bx) + \frac{1}{2}(\by-\bx)^T H (\by-\bx) - f(\bx) - \grad f(\bx)^T (\by-\bx)|, \\
        &\leq \frac{1}{2} L_{\grad f} \Delta^2 + |c-f(\bx)| + |(\by-\bx)^T (\bg-\grad f(\bx))| + \frac{1}{2}|(\by-\bx)^T H(\by-\bx)|, \\
        &\leq \frac{1}{2} L_{\grad f} \Delta^2 + \frac{1}{2} p^{3/2} \Lambda (L_{\grad f} + \kappa_H) \beta^2 \min(\Delta,1)^2  \nonumber \\
        &\qquad\qquad\qquad\qquad + p^{3/2} \Lambda (L_{\grad f}+\kappa_H) \beta^2 \Delta^2 + \frac{1}{2}p \Lambda^2 \kappa_H \beta^2 \Delta^2,
    \end{align}
    and we get the value of $\kappaef$ after $\min(\Delta,1)\leq \Delta$.

    To get $\kappaeg$ we now fix an arbitrary $\t{\by}\in B(\bx,1)\cap C$ and again consider the cases $\Delta\geq 1$ and $\Delta<1$ separately.
  First, if $\Delta\geq 1$, then $\t{\by} \in B(\bx,\min(\Delta,1))\cap C$, and applying \lemref{lem_fl_intermediate_quad_new} we get
  \begin{align}
      |(\t{\by}-\bx)^T (\bg-\grad f(\bx))| &\leq p^{3/2} \Lambda (L_{\grad f} + \kappa_H) \beta^2 \min(\Delta,1)^2, \\
      &\leq p^{3/2} \Lambda (L_{\grad f} + \kappa_H) \beta^2 \Delta, \label{eq_interp_quad_tmp6a}
  \end{align}
  since $\min(\Delta,1)^2 = 1 \leq \Delta$.
  Alternatively, if $\Delta<1$ then the convexity of $C$ implies that $\hat{\by}\defeq \bx+\Delta(\t{\by}-\bx)\in B(\bx,\Delta)\cap C = B(\bx,\min(\Delta,1))\cap C$.
  Again we apply \lemref{lem_fl_intermediate_quad_new} and get
  \begin{align}
      |(\t{\by}-\bx)^T (\bg-\grad f(\bx))| &= \Delta^{-1} |(\hat{\by}-\bx)^T (\bg-\grad f(\bx))|, \\
      &\leq p^{3/2} \Lambda (L_{\grad f} + \kappa_H) \beta^2 \Delta^{-1} \min(\Delta,1)^2, \\
      &= p^{3/2} \Lambda (L_{\grad f} + \kappa_H) \beta^2 \Delta. \label{eq_interp_quad_tmp6b}
  \end{align}
  The value for $\kappaeg$ then follows from \eqref{eq_interp_quad_tmp6a} and \eqref{eq_interp_quad_tmp6b}.
\end{proof}

\begin{remark}
	The fully linear error bounds for the unconstrained case \cite{Conn2008,Conn2009} give $\kappaef,\kappaeg = \bigO(p^{3/2} \Lambda (L_{\grad f} + \kappa_H))$, so \thmref{thm_fl_quad} matches the bound for $\kappaeg$ up to a factor of $\beta^2$ (but with a value of $\kappa_H$ which is $\bigO(n)$ smaller than the unconstrained case; see \remref{rem_kappaH_compare}).
    Our value of $\kappaef$ has an extra term of size $\bigO(p \Lambda^2 \kappa_H \beta^2)$ and so may be larger depending on the relative sizes of $p$ and $\Lambda$.
\end{remark}

\section{Constructing $\b{\Lambda}$-Poised Quadratic Models} \label{sec_constructing_models}

To meet all the requirements of \algref{alg_cdfotr}, we require procedures which can verify whether or not a given model is $C$-fully linear, and if not, modify it to be $C$-fully linear.
From \thmref{thm_fl_quad} it is clear that it suffices to verify/ensure that $\{\by_1,\ldots,\by_p\}$ is $\Lambda$-poised for underdetermined quadratic interpolation in $B(\bx,\Delta)\cap C$ and $\|\by_t-\bx\| \leq \beta \min(\Delta, 1)$ for all $t=0,\ldots,p$.

In the first case---verifying whether not the interpolation set is $\Lambda$-poised---we may follow the approach in \cite[Section 4.3.1]{LR_ConvexDFO2021} and simply maximize/minimize each $\ell_t$ in $B(\bx,\min(\Delta,1))\cap C$.
Checking $\|\by_t-\bx\| \leq \beta\min(\Delta,1)$ is straightforward.
Although maximizing/minimizing $\ell_t$, a possibly nonconvex quadratic function, in $B(\bx,\min(\Delta,1))\cap C$ may not be straightforward depending on the structure of $C$, we only need to know if the solution is above/below $\Lambda$ and not the exact solution.
Existing solvers, including potentially global optimization solvers, are sufficient for this.

We now consider the situation where $\{\by_1,\ldots,\by_p\}$ is not $\Lambda$-poised, and we wish to construct a new interpolation set which is $\Lambda$-poised.
There are two possible scenarios:
\begin{itemize}
	\item The associated matrix $F$ in \eqref{eq_quad_interp_system} is not invertible; or
	\item The matrix $F$ is invertible but $|\ell_t(\by)| > \Lambda$ for some $t=1,\ldots,p$ and $\by\in B(\bx,\min(\Delta,1))\cap C$.
\end{itemize}
We will describe how to handle both of these situations in \secref{sec_constructing_poised_sets}, but we will first need a technical result about how the matrix $F$ in \eqref{eq_quad_interp_system} changes as interpolation points are changed.

\subsection{Updating the matrix $\bm{F}$}
In this section we analyze changes in the determinant of the matrix $F$ as interpolation points are updated.
These results are based on ideas from \cite[Section 4]{Powell2004}, which considers changes in $F^{-1}$ from updating interpolation points, rather than $\det(F)$.

\begin{lemma} \label{lem_row_col_update}
	If $A$ is a symmetric, invertible matrix and we form $\t{A}$ by changing the $t$-th row and column of $A$ to $\t{\bv}$, then
	\begin{align}
		\det(\t{A}) = \left[(\bee_t^T A^{-1} \t{\bv})^2 + (\bee_t^T A^{-1} \bee_t) \t{\bv}^T (\bee_t - A^{-1} \t{\bv})\right] \det(A).
	\end{align}
\end{lemma}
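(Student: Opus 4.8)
The plan is to realize $\t{A}$ as a low-rank \emph{symmetric} perturbation of $A$, factor out $A$, and then collapse the resulting $n\times n$ determinant to a $2\times2$ one via the matrix determinant lemma. Throughout I would exploit that $A$ (hence $A^{-1}$) is symmetric.

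First I would set $\bu := \t{\bv} - A\bee_t$, the difference between the new and old $t$-th columns, and $u_t := \bee_t^T\bu$, and claim the identity
\[
	\t{A} = A + \bu\bee_t^T + \bee_t\bu^T - u_t\,\bee_t\bee_t^T .
\]
The term $\bu\bee_t^T$ overwrites column $t$, the term $\bee_t\bu^T$ overwrites row $t$, and the subtracted $u_t\,\bee_t\bee_t^T$ undoes the double counting of the $(t,t)$ entry; one verifies this entrywise by splitting into the cases $i=t$, $j=t$, and $i,j\neq t$, using $A_{t,j}=A_{j,t}$. This identity is the crux, and the step I expect to be the main obstacle: the simultaneous symmetric change of a row and column is not a single rank-one update, and the diagonal correction term is easy to get wrong.

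Next I would write $\t{A} = A\bigl(I + A^{-1}\bu\bee_t^T + A^{-1}\bee_t\bu^T - u_t A^{-1}\bee_t\bee_t^T\bigr)$ and package the rank-two part as $UW^T$ with
\[
	U = \bigl[\,A^{-1}\bu \ \ A^{-1}\bee_t\,\bigr], \qquad W = \bigl[\,\bee_t \ \ \bu - u_t\bee_t\,\bigr] \in \R^{n\times2},
\]
after grouping the last two terms as $A^{-1}\bee_t(\bu - u_t\bee_t)^T$. By Sylvester's determinant identity $\det(I+UW^T)=\det(I_2 + W^TU)$, so $\det(\t{A}) = \det(A)\,\det(I_2 + W^TU)$. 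Introducing the scalars $\alpha := \bee_t^T A^{-1}\bu$, $\beta := \bee_t^T A^{-1}\bee_t$, and $\gamma := \bu^T A^{-1}\bu$ (and using symmetry to identify $\bu^T A^{-1}\bee_t = \alpha$), the $2\times2$ matrix becomes explicit, and I expect its determinant to simplify---after a clean cancellation of the off-diagonal cross terms $\pm u_t\alpha\beta$---to
\[
	\det(I_2 + W^TU) = (1+\alpha)^2 - \beta(u_t + \gamma).
\]

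Finally I would substitute $\bu = \t{\bv} - A\bee_t$ back in and reduce to the claimed form. Symmetry gives $1+\alpha = \bee_t^T A^{-1}\t{\bv}$ and, after expanding $\gamma$, the relation $u_t + \gamma = \t{\bv}^T A^{-1}\t{\bv} - \bee_t^T\t{\bv}$; substituting these yields
\[
	(1+\alpha)^2 - \beta(u_t+\gamma) = (\bee_t^T A^{-1}\t{\bv})^2 + (\bee_t^T A^{-1}\bee_t)\,\t{\bv}^T(\bee_t - A^{-1}\t{\bv}),
\]
which is precisely the bracketed factor. Multiplying by $\det(A)$ completes the proof. The remaining work is purely algebraic bookkeeping of these scalar substitutions, which I would present compactly rather than term by term.
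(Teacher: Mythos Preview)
Your proposal is correct and follows essentially the same route as the paper: the identical symmetric low-rank update $\t{A}=A+\bu\bee_t^T+\bee_t\bu^T-u_t\bee_t\bee_t^T$ (with $\bu=\t{\bv}-A\bee_t$), the matrix determinant lemma, and the same algebraic reduction to the bracketed factor. The only difference is cosmetic: the paper treats the perturbation as a rank-$3$ update $UV^T$ and computes a $3\times3$ determinant, whereas you group $\bee_t\bu^T-u_t\bee_t\bee_t^T=\bee_t(\bu-u_t\bee_t)^T$ to get a rank-$2$ factorization and a $2\times2$ Sylvester determinant, which is a mild streamlining of the same computation.
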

\begin{proof}
	Denote the $t$-th row and column of $A$ by $\bv:= A\bee_t$. 
 We first note that $\t{A}$ can be written as a symmetric rank-3 update of $A$:
	\begin{align}
		\t{A} = A + (\t{\bv}-\bv)\bee_t^T + \bee_t (\t{\bv}-\bv)^T - (\t{v}_t-v_t) \bee_t \bee_t^T. \label{eq_low_rank_update}
	\end{align}
	The first two update terms add $\t{\bv}-\bv$ to the $t$-th column and row respectively, and the last term corrects for the double update of $A_{t,t}$ from the first two updates.
	We can then apply the matrix determinant lemma\footnote{That is, $\det(A+UV^T) = \det(I+V^T A^{-1} U) \det(A)$ if $A$ is invertible, e.g.~\cite[Corollary 18.1.4]{Harville2008}.} to get 
	\begin{align}
		\frac{\det(\t{A})}{\det(A)} &= \det\left(\begin{bmatrix} 1 + \bee_t^T A^{-1}(\t{\bv}-\bv) & \bee_t^T A^{-1} \bee_t & \bee_t^T A^{-1} \bee_t \\ (\t{\bv}-\bv)^T A^{-1} (\t{\bv}-\bv) & 1 + (\t{\bv}-\bv)^T A^{-1}\bee_t & (\t{\bv}-\bv)^T A^{-1}\bee_t \\ -(\t{v}_t-v_t)\bee_t^T A^{-1}(\t{\bv}-\bv) & -(\t{v}_t-v_t)\bee_t^T A^{-1}\bee_t & 1 - (\t{v}_t-v_t)\bee_t^T A^{-1}\bee_t \end{bmatrix}\right), \\
		&= \det\left(\begin{bmatrix} 1+\tau & \alpha & \alpha \\ \beta & 1+\tau & \tau \\ -\gamma\tau & -\gamma\alpha & 1-\gamma\alpha \end{bmatrix}\right),
	\end{align}
	where $\alpha := \bee_t^T A^{-1} \bee_t$, $\beta := (\t{\bv}-\bv)^T A^{-1} (\t{\bv}-\bv)$, $\gamma := \t{v}_t-v_t$ and $\tau := \bee_t^T A^{-1} (\t{\bv}-\bv) = (\t{\bv}-\bv)^T A^{-1} \bee_t$ since $A^{-1}$ is symmetric.
	Directly computing, we get
	\begin{align}
		\frac{\det(\t{A})}{\det(A)} &= (1+\tau)\left[(1+\tau)(1-\gamma\alpha) + \tau\gamma\alpha\right] - \beta\left[\alpha(1-\gamma\alpha) + \gamma\alpha^2\right] - \gamma\tau\left[\alpha\tau - \alpha(1+\tau)\right], \\
		&= (1+\tau)\left[1+\tau-\gamma\alpha\right] - \beta\alpha + \gamma\tau\alpha, \\
		&= (1+\tau)^2 - \alpha(\gamma+\beta). \label{eq_row_col_update_tmp1_alt}
	\end{align}
	Finally, since $\bv=A\bee_t$, we get
	\begin{align}
		\tau = \bee_t^T A^{-1} (\t{\bv}-\bv) = \bee_t^T A^{-1} \t{\bv} - \bee_t^T \underbrace{A^{-1} \bv}_{=\bee_t} = \bee_t^T A^{-1} \t{\bv}-1, \label{eq_row_col_update_tmp2_alt}
	\end{align}
	and
	\begin{align}
		\gamma+\beta &= (\t{\bv}-\bv)^T \bee_t + (\t{\bv}-\bv)^T A^{-1} (\t{\bv}-\bv), \\
		&= \t{\bv}^T \bee_t - \bv^T \bee_t + \t{\bv}^T A^{-1} \t{\bv} - 2\t{\bv}^T A^{-1} \bv + \bv^T A^{-1} \bv, \\
		&= \t{\bv}^T \bee_t - \bv^T \bee_t + \t{\bv}^T A^{-1} \t{\bv} - 2\t{\bv}^T \bee_t + \bv^T \bee_t, \\
		&= \t{\bv}^T (A^{-1} \t{\bv} - \bee_t). \label{eq_row_col_update_tmp3_alt}
	\end{align}
	Our result then follows from \eqref{eq_row_col_update_tmp1_alt} combined with the definition of $\alpha$, \eqref{eq_row_col_update_tmp2_alt} and \eqref{eq_row_col_update_tmp3_alt}.
\end{proof}

\begin{theorem} \label{thm_quad_update_det}
	Suppose the interpolation set $\{\by_1,\ldots,\by_p\}\in\R^n$ is such that the matrix $F$ in \eqref{eq_quad_interp_system} is invertible.
	If the point $\by_t$ for some $t\in\{1,\ldots,p\}$ is changed to a new point $\by\in\R^n$, then the new interpolation set yields a new matrix $\t{F}$ in \eqref{eq_quad_interp_system} which satisfies
	\begin{align}
		|\det(\t{F})| \geq \ell_t(\by)^2 |\det(F)|.
	\end{align}
\end{theorem}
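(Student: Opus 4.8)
The plan is to realize the replacement of $\by_t$ by $\by$ as a symmetric row/column update of $F$ and then invoke \lemref{lem_row_col_update}. First I would observe that in the block matrix \eqref{eq_quad_interp_system} the point $\by_t$ enters only through the $t$-th row and column (the $t$-th row/column of $Q$, together with the $t$-th row of $M$ and its transpose), all carrying the single index $t\le p$ in the full $(p+n+1)\times(p+n+1)$ system. Hence $\t{F}$ is obtained from $F$ by overwriting row and column $t$ with the $t$-th row of $\t{F}$, call it $\t{\bv}$. Comparing entrywise with $\bphi(\by)$ from \eqref{eq_min_frob_lagrange}, the final $n+1$ coordinates agree ($[1,(\by-\bx)^T]^T$) and the first $p$ coordinates agree except in position $t$, where the true matrix uses the new diagonal $\tfrac12\|\by-\bx\|^4$ while $\bphi(\by)_t=\tfrac12[(\by-\bx)^T(\by_t-\bx)]^2$ still uses the old point. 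Thus $\t{\bv}=\bphi(\by)+\delta\,\bee_t$ with $\delta:=\tfrac12\|\by-\bx\|^4-\tfrac12[(\by-\bx)^T(\by_t-\bx)]^2$.

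Next I would apply \lemref{lem_row_col_update} with $A=F$ and this $\t{\bv}$, expanding the bracketed factor via the key identity $\bee_t^T F^{-1}\bphi(\by)=\ell_t(\by)$ of \eqref{eq_min_frob_lagrange}. Writing $\sigma:=\bee_t^T F^{-1}\bee_t$, $\t{v}_t:=\tfrac12\|\by-\bx\|^4$ and $P:=\bphi(\by)^T F^{-1}\bphi(\by)$, a short computation shows that every term involving $\delta$ cancels (the cross terms $2\ell_t\delta\sigma$ and the $\delta^2\sigma^2$ terms each appear with both signs), and the determinant ratio collapses to
\[
\frac{\det(\t{F})}{\det(F)}=\ell_t(\by)^2+\sigma\,(\t{v}_t-P).
\]
It therefore remains to show that the correction $\sigma\,(\t{v}_t-P)$ is nonnegative, for then $\det(\t{F})/\det(F)\ge\ell_t(\by)^2\ge 0$ and the claim follows.

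The crux is thus to sign $\sigma\,(\t{v}_t-P)$, and here I would identify each factor with a ratio of determinants of saddle-point matrices of the form \eqref{eq_quad_interp_system}. By the cofactor formula for a symmetric matrix, $\sigma=(F^{-1})_{tt}=\det(F_{-t})/\det(F)$, where $F_{-t}$ is precisely the matrix \eqref{eq_quad_interp_system} for the $(p-1)$-point set with $\by_t$ deleted. By the Schur-complement determinant identity applied to the $(p+n+2)\times(p+n+2)$ bordered matrix having $F$ as its leading block, $\bphi(\by)$ as its final row/column, and $\t{v}_t$ in the bottom-right corner---which, up to a symmetric permutation (and hence with identical determinant), is the matrix \eqref{eq_quad_interp_system} for the $(p+1)$-point set with $\by$ appended---one gets $\t{v}_t-P=\det(F_{+})/\det(F)$. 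Consequently $\sigma\,(\t{v}_t-P)=\det(F_{-t})\det(F_{+})/\det(F)^2$.

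Finally I would invoke the inertia of such saddle-point systems. Since $Q$ is symmetric positive semidefinite and the $M$-blocks under consideration have full column rank, every nonsingular matrix of the form \eqref{eq_quad_interp_system} has exactly $n+1$ negative eigenvalues, independently of the number of interpolation points \cite{Benzi2005}; hence its determinant has the fixed sign $(-1)^{n+1}$, while a singular one has determinant $0$. Therefore $\det(F_{-t})$ and $\det(F_{+})$ are each either $0$ or of sign $(-1)^{n+1}$, their product is $\ge 0$, and $\det(F)^2>0$, so $\sigma\,(\t{v}_t-P)\ge 0$, which finishes the proof. I expect this sign analysis to be the main obstacle: $\sigma\,(\t{v}_t-P)$ is a product of two individually indefinite-looking quantities, and only the consistent inertia of the saddle-point interpolation matrices forces it to be nonnegative; by contrast the $\delta$-cancellation of the second paragraph, though essential for producing the clean $\ell_t(\by)^2$ term, is routine algebra.
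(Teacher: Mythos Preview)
Your argument is correct and, through the derivation of $\det(\t{F})/\det(F)=\ell_t(\by)^2+\sigma(\t{v}_t-P)$, is identical to the paper's (with $\sigma=\alpha_t$, $\t{v}_t-P=\beta_t$ in the paper's notation, and your $\delta$ equal to the paper's $\eta_t$). The only divergence is in how you establish $\sigma(\t{v}_t-P)\ge 0$. The paper simply cites \cite[Lemma~2]{Powell2004} for $\alpha_t,\beta_t\ge 0$ (a result stated under the side condition $\ell_t(\by)\neq 0$, which is why the paper opens with the trivial case $\ell_t(\by)=0$). You instead give a self-contained argument: identifying $\sigma=\det(F_{-t})/\det(F)$ via the cofactor formula and $\t{v}_t-P=\det(F_{+})/\det(F)$ via a bordered Schur complement, then using the fixed inertia $(p',\,n+1,\,0)$ of nonsingular saddle-point matrices with positive semidefinite $(1,1)$-block to conclude that all three determinants are either zero or share the sign $(-1)^{n+1}$. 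This effectively reproves the portion of Powell's lemma needed here and, as a bonus, makes the $\ell_t(\by)=0$ case split unnecessary.

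One small imprecision: your blanket assertion that ``the $M$-blocks under consideration have full column rank'' is not automatic for $F_{-t}$ (removing a point can destroy it). Rather, full column rank of $M_{-t}$ is a \emph{consequence} of nonsingularity of $F_{-t}$, since $M_{-t}\bw=\b{0}$ would give $F_{-t}[\b{0};\bw]=\b{0}$; and nonsingularity is exactly the case in which you invoke the inertia result (the singular case already giving determinant zero). So the logic is sound, just slightly under-stated.
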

\begin{proof}
	If $\ell_t(\by)=0$, then the result holds trivially, so we assume without loss of generality that $\ell_t(\by) \neq 0$.
	
	Changing the interpolation point $\by_t$ to $\by$ requires replacing the $t$-th row and column of $F$ by $\t{\bv}$, where
	\begin{subequations}
	\begin{align}
		\t{\bv}_s &= \frac{1}{2}\left[(\by-\bx)^T (\by_s-\bx)\right]^2, \qquad \text{if $s\in\{1,\ldots,p\}$ except for $s=t$}, \\
		\t{\bv}_t &= \frac{1}{2}\left[(\by-\bx)^T (\by-\bx)\right]^2, \\
		\t{\bv}_{p+2} &= 1, \\
		\t{\bv}_{p+2+i} &= [\by-\bx]_i, \qquad \forall i=1,\ldots,n.
	\end{align}
	\end{subequations}
	That is, $\t{\bv}$ is the same as $\bphi(\by)$ from \eqref{eq_min_frob_lagrange} except for the $t$-th entry.
	Specifically, we have
	\begin{align}
		\t{\bv} &= \bphi(\by) + \eta_t \bee_t,
	\end{align}
	where
	\begin{align}
		\eta_t &= \frac{1}{2}\left[(\by-\bx)^T (\by-\bx)\right]^2 - \frac{1}{2}\left[(\by-\bx)^T (\by_t-\bx)\right]^2 = \frac{1}{2} \|\by-\bx\|^4 - \bphi(\by)^T \bee_t.
	\end{align}
	Given this, \lemref{lem_row_col_update} gives us (denoting $\alpha_t := \bee_t^T F^{-1} \bee_t$ for notational convenience)
	\begin{align}
		\frac{\det(\t{F})}{\det(F)} &= \left(\bee_t^T F^{-1} \bphi(\by) + \eta_t \bee_t^T F^{-1} \bee_t\right)^2 + (\bee_t^T F^{-1} \bee_t) (\bphi(\by) + \eta_t \bee_t)^T (\bee_t - F^{-1} (\bphi(\by)+\eta_t\bee_t)), \\
		&= \left(\ell_t(\by) + \eta_t \alpha_t\right)^2 + \alpha_t\left(\bphi(\by)^T \bee_t - \bphi(\by)^T F^{-1} \bphi(\by) - \eta_t \ell_t(\by) + \eta_t - \eta_t \ell_t(\by) - \eta_t^2 \alpha_t\right), \\
		&= \ell_t(\by)^2 + \alpha_t\left(\bphi(\by)^T \bee_t - \bphi(\by)^T F^{-1} \bphi(\by) + \eta_t\right).
	\end{align}
	So from the definition of $\eta_t$, we get
	\begin{align}
		\det(\t{F}) &= \left(\ell_t(\by)^2 + \alpha_t\beta_t\right) \: \det(F),
	\end{align}
	where $\beta_t := \frac{1}{2}\|\by-\bx\|^4 - \bphi(\by)^T F^{-1} \bphi(\by)$.
	Finally, \cite[Lemma 2]{Powell2004} gives us $\alpha_t,\beta_t \geq 0$ provided $\ell_t(\by)\neq 0$, which gives us the result.\footnote{That $\ell_t(\by)\neq 0$ is not explicitly assumed in the statement of \cite[Lemma 2]{Powell2004} but is required as the proof relies on \cite[Theorem, p.~196]{Powell2004}.}
\end{proof}

\begin{remark}
	\thmref{thm_quad_update_det} does not require any of the points $\by_1,\ldots,\by_p$ or $\by$ to be in $C$.
	We will use this when constructing interpolation sets for which $F$ is invertible (\algref{alg_quad_init}).
\end{remark}

\begin{remark}
	A similar analysis to the above holds for linear interpolation ($p=n$), where \thmref{thm_quad_update_det} simplifies to $|\det(\t{F})| = |\ell_t(\by)| \cdot |\det(F)|$ as shown in \cite[Section 4.2]{LR_DFOGN2019}.
	This allows us to use an approach like \algref{alg_quad_poised} below to construct $\Lambda$-poised sets for linear interpolation, a simpler method than the one outlined in \cite[Section 4.3]{LR_ConvexDFO2021}.
\end{remark}

\subsection{Constructing $\bm{\Lambda}$-Poised Sets} \label{sec_constructing_poised_sets}
We begin by describing how to construct an initial interpolation set, contained in $B(\bx,\min(\Delta,1))\cap C$ for which $F$ is invertible.
The full process for this is given in \algref{alg_quad_init}.
We first generate an initial set $\{\by_1,\ldots,\by_p\}$ in $B(\bx,\min(\Delta,1))$, but not necessarily in $C$, for which $F$ is invertible.
We can do this using the approach outlined in \cite[Section 2]{Powell2009}, which chooses points of the form $\bx \pm \min(\Delta,1) \bee_t$ or $\bx \pm \min(\Delta,1) \bee_s \pm \min(\Delta,1) \bee_t$ for some $s,t\in\{1,\ldots,n\}$ in a specific way.
We then replace any of these points which are not in $C$ by new points in $B(\bx,\min(\Delta,1))\cap C$ while maintaining an invertibile $F$.

\begin{algorithm}[tb]
\begin{algorithmic}[1]
\Require Trust region center $\bx\in C$ and radius $\Delta>0$, number of interpolation points $p\in\{n+2, \ldots, (n+1)(n+2)/2\}$.
\vspace{0.2em}
\State Generate an initial collection of $p$ points $\{\by_1,\ldots,\by_p\} \subset B(\bx,\min(\Delta,1))$ using the method from \cite[Section 2]{Powell2009}. \label{ln_poised_init}
\While{there exists $t\in\{1,\ldots,p\}$ with $\by_t \notin C$}
	\State Calculate Lagrange polynomial $\ell_t$ and find $\by\in B(\bx,\min(\Delta,1))\cap C$ with $\ell_t(\by) \neq 0$. \label{ln_poised_replacement}
	\State Replace the interpolation point $\by_t$ by $\by$. 
\EndWhile
\State \Return interpolation set $\{\by_1,\ldots,\by_p\}$.
\end{algorithmic}
\caption{Construct an underdetermined quadratic interpolation set with invertible $F$ \eqref{eq_quad_interp_system}.}
\label{alg_quad_init}
\end{algorithm}

\begin{theorem} \label{thm_quad_init}
	For any $\bx\in C$, $\Delta>0$ and $p\in\{n+2, \ldots. (n+1)(n+2)/2\}$, the output of \algref{alg_quad_init} is an interpolation set $\{\by_1,\ldots,\by_p\} \subset B(\bx,\min(\Delta,1))\cap C$ for which $F$ \eqref{eq_quad_interp_system} is invertible.
\end{theorem}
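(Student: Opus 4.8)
The plan is to maintain two invariants through the while-loop of \algref{alg_quad_init}: (i) the matrix $F$ \eqref{eq_quad_interp_system} associated with the current set is invertible, and (ii) every interpolation point lies in $B(\bx,\min(\Delta,1))$. Both hold immediately after line~\ref{ln_poised_init}, since the construction from \cite[Section 2]{Powell2009} produces points of the stated form inside $B(\bx,\min(\Delta,1))$ for which $F$ is invertible. I would then show that each pass through the loop preserves both invariants while strictly reducing the number of points lying outside $C$, which forces termination with a set that proves the claim.

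The crux is establishing that the replacement point required on line~\ref{ln_poised_replacement} always exists. Because $F$ is invertible by invariant (i), the Lagrange polynomials are well-defined via \eqref{eq_quad_lagrange_system} and satisfy $\ell_t(\by_t)=1$, so each $\ell_t$ is a nonzero quadratic polynomial. Next I would verify that $B(\bx,\min(\Delta,1))\cap C$ contains a nonempty open set: since $C$ has nonempty interior (\assref{ass_feasible_set}) and $\bx\in C$, any interior point $\bz$ of $C$ yields points $(1-\theta)\bx+\theta\bz$ that lie in the interior of $C$ for all $\theta\in(0,1]$ by convexity, and in $B(\bx,\min(\Delta,1))$ once $\theta$ is small. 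A nonzero polynomial cannot vanish on a nonempty open subset of $\R^n$, so there exists $\by\in B(\bx,\min(\Delta,1))\cap C$ with $\ell_t(\by)\neq 0$, as needed. This nonempty-interior argument—which must remain valid even when $\bx$ lies on the boundary of $C$—is the main obstacle, and it is precisely here that the nonempty-interior hypothesis on $C$ is indispensable.

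Given such a $\by$, \thmref{thm_quad_update_det} yields $|\det(\t{F})|\geq \ell_t(\by)^2|\det(F)|>0$ for the updated matrix, preserving invariant (i), while $\by\in B(\bx,\min(\Delta,1))$ by choice preserves invariant (ii). Since the update alters only the single offending point $\by_t$ (moving it from outside $C$ to inside $C$) and leaves the remaining points—and hence their membership in $C$—untouched, each iteration decreases the number of points outside $C$ by exactly one; as at most $p$ points can be outside $C$ initially, the loop halts after at most $p$ iterations. At termination every point lies in $C$ (the loop condition has failed) and in $B(\bx,\min(\Delta,1))$ by invariant (ii), while $F$ is invertible by invariant (i), which is exactly the assertion of the theorem.
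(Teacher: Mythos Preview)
Your proof is correct and follows essentially the same structure as the paper's: invertibility of the initial $F$ from Powell's construction, preservation of invertibility at each replacement via \thmref{thm_quad_update_det}, and termination in at most $p$ iterations. You actually go beyond the paper by explicitly justifying that a replacement point $\by\in B(\bx,\min(\Delta,1))\cap C$ with $\ell_t(\by)\neq 0$ always exists---using that $B(\bx,\min(\Delta,1))\cap C$ has nonempty interior (from \assref{ass_feasible_set}, convexity, and $\bx\in C$) and that a nonzero polynomial cannot vanish on an open set---whereas the paper's proof simply assumes line~\ref{ln_poised_replacement} can be executed without comment.
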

\begin{proof}
	Firstly, \algref{alg_quad_init} terminates after at most $p$ iterations, since an index $t$ can only ever be selected once (after which the new $\by_t \in C$).
	
	Next, all points in the output set were either originally generated in line \ref{ln_poised_init}, in which case they must have originally been in both $B(\bx,\min(\Delta,1))$ (from the construction in line \ref{ln_poised_init}) and $C$ (in order to never be replaced in the main loop), or were a $\by\in B(\bx,\min(\Delta,1))\cap C$ chosen in line \ref{ln_poised_replacement}.
	Hence $\{\by_1,\ldots,\by_p\} \subset B(\bx,\min(\Delta,1))\cap C$.
	
	Finally, the initial collection $\{\by_1,\ldots,\by_p\}$ from \cite[Section 2]{Powell2009} generates an invertible $F$, with an explicit formula for $F^{-1}$ given in \cite[eqns.~2.11--2.15]{Powell2009}.
	Hence the initial $F$ satisfies $|\det(F)| > 0$.
	Then at each iteration replace $\by_t$ with a point $\by$ for which $\ell_t(\by) \neq 0$, and so the new $F$ also satisfies $|\det(F)| > 0$ by \thmref{thm_quad_update_det}.
	Thus the output interpolation set has an invertible $F$.
\end{proof}

\begin{algorithm}[tb]
\begin{algorithmic}[1]
\Require Trust region center $\bx\in C$ and radius $\Delta>0$, number of interpolation points $p\in\{n+2, \ldots, (n+1)(n+2)/2\}$, optional initial interpolation set $\{\by_1,\ldots,\by_p\}\subset C$, poisedness constant $\Lambda>1$.
\vspace{0.2em}
\If{initial interpolation set not provided or yields singular $F$ \eqref{eq_quad_interp_system} or $\max_{t=1,\ldots,p} \|\by_t-\bx\| > \min(\Delta,1)$} \label{ln_geom_check}
	\State Set $\{\by_1,\ldots,\by_p\}$ to be the output of \algref{alg_quad_init}.
\EndIf
\While{$\max_{\by\in B(\bx,\min(\Delta,1))\cap C} \max_{t=1,\ldots,p} |\ell_t(\by)| > \Lambda$} \label{ln_loop_start}
	\State Find $t\in\{1,\ldots,p\}$ and $\by\in B(\bx,\min(\Delta,1))\cap C$ with $|\ell_t(\by)| > \Lambda$.
	\State Replace the interpolation point $\by_t$ by $\by$ and recalculate Lagrange polynomials.
\EndWhile
\State \Return interpolation set $\{\by_1,\ldots,\by_p\}$.
\end{algorithmic}
\caption{Construct a $\Lambda$-poised underdetermined quadratic interpolation set.}
\label{alg_quad_poised}
\end{algorithm}

We can now present our algorithm to generate a $\Lambda$-poised set for minimum Frobenius norm interpolation, given in \algref{alg_quad_poised}.
This algorithm is very similar to existing methods for unconstrained linear/quadratic (e.g.~\cite[Algorithm 6.3]{Conn2009}) or constrained linear poisedness (\cite[Algorithm 4.2]{LR_ConvexDFO2021}).
Essentially we first ensure we have an interpolation set with invertible $F$ (so we can construct the associated Lagrange polynomials), and we then iteratively find indices $t$ and points $\by$ with $|\ell_t(\by)|>\Lambda$, replacing $\by_t$ with $\by$.

\begin{theorem} \label{thm_quad_poised}
	For any $\bx\in C$, $\Delta>0$, $p\in\{n+2, \ldots. (n+1)(n+2)/2\}$ and $\Lambda>1$, \algref{alg_quad_poised} produces an interpolation set $\{\by_1,\ldots,\by_p\}$ which is $\Lambda$-poised for minimum Frobenius norm interpolation in $B(\bx,\Delta)\cap C$ and for which $\|\by_t-\bx\| \leq \min(\Delta,1)$ for all $t=1,\ldots,p$.
\end{theorem}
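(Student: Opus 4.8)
The plan is to verify three things about \algref{alg_quad_poised}: that after the initialization block (line~\ref{ln_geom_check}) we enter the main loop with an interpolation set contained in $B(\bx,\min(\Delta,1))\cap C$ and an invertible $F$; that the \texttt{while} loop terminates; and that at termination the output satisfies \defref{def_new_poised_quad} together with the radius bound $\|\by_t-\bx\|\leq\min(\Delta,1)$. For the initialization, I would split into two cases. If the test on line~\ref{ln_geom_check} succeeds, the set is replaced by the output of \algref{alg_quad_init}, which by \thmref{thm_quad_init} lies in $B(\bx,\min(\Delta,1))\cap C$ and has invertible $F$. If the test fails, its negation says the provided set (feasible by the \textbf{Require} assumption) already has invertible $F$ and satisfies $\max_t\|\by_t-\bx\|\leq\min(\Delta,1)$, so it too lies in $B(\bx,\min(\Delta,1))\cap C$. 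Either way we begin the loop in the desired state.

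Next I would establish a loop invariant: each iteration preserves feasibility and containment in the ball, keeps $F$ invertible, and strictly increases $|\det(F)|$. Each replacement swaps some $\by_t$ for a point $\by\in B(\bx,\min(\Delta,1))\cap C$ with $|\ell_t(\by)|>\Lambda$, so the new set remains feasible and inside the ball. By \thmref{thm_quad_update_det}, the updated matrix $\t{F}$ satisfies $|\det(\t{F})|\geq \ell_t(\by)^2\,|\det(F)| > \Lambda^2\,|\det(F)|$. Since $\Lambda>1$, this keeps $\t{F}$ invertible and multiplies $|\det(F)|$ by a factor strictly exceeding $1$.

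The hard part will be termination, which I would obtain from a compactness bound that caps the geometric growth of $|\det(F)|$. Throughout the loop every interpolation point stays in the compact set $B(\bx,\min(\Delta,1))\cap C$ (closed ball intersected with the closed set $C$), and each entry of $F$ — that is, of the blocks $Q$, $M$, and $M^T$ in \eqref{eq_quad_interp_system} — is a polynomial, hence continuous, function of $(\by_1,\ldots,\by_p)$. Therefore $|\det(F)|$ is continuous on a compact domain and is bounded above by some finite $D_{\max}$. Writing $|\det(F_0)|>0$ for the strictly positive value obtained after initialization, after $k$ iterations the invariant gives $|\det(F)|\geq \Lambda^{2k}\,|\det(F_0)|$, which forces $\Lambda^{2k}\,|\det(F_0)|\leq D_{\max}$, i.e.\ $k \leq \log\!\big(D_{\max}/|\det(F_0)|\big)/(2\log\Lambda)$. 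Only finitely many iterations can occur, so the loop halts.

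Finally, at termination the loop guard fails, so $\max_{\by\in B(\bx,\min(\Delta,1))\cap C}\max_{t=1,\ldots,p}|\ell_t(\by)|\leq\Lambda$, while $F$ remains invertible by the invariant. These are precisely the two conditions in \defref{def_new_poised_quad}, establishing $\Lambda$-poisedness for minimum Frobenius norm interpolation in $B(\bx,\Delta)\cap C$. Since every point remained in $B(\bx,\min(\Delta,1))\cap C$, the radius bound $\|\by_t-\bx\|\leq\min(\Delta,1)$ holds for all $t$, completing the argument.
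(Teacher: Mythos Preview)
Your proposal is correct and follows essentially the same approach as the paper's proof: establish invertibility and containment in $B(\bx,\min(\Delta,1))\cap C$ at the start of the loop, use \thmref{thm_quad_update_det} to get geometric growth of $|\det(F)|$ by a factor at least $\Lambda^2$ per iteration, and invoke compactness of $B(\bx,\min(\Delta,1))\cap C$ together with continuity of $\det(F)$ in the interpolation points to force termination. Your version is slightly more explicit about why the domain is compact and why $\det(F)$ is continuous, but the argument is the same.
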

\begin{proof}
	If \algref{alg_quad_poised} terminates, then by the definition of the loop check it must satisfy the requirements of the theorem, so it suffices to show that the `while' loop terminates.
	
	At the start of the `while' loop in line \ref{ln_loop_start}, we have that $\{\by_1,\ldots,\by_p\} \subset B(\bx,\min(\Delta,1))\cap C$ and $F$ is invertible (either because the `if' statement in line \ref{ln_geom_check} evaluates to false or by \thmref{thm_quad_init}).
	Hence at the start of the `while' loop we have $|\det(F)| > 0$.
	Let us call this value $d_0 > 0$.

	In each iteration of the `while' loop, we increase $|\det(F)|$ by a factor of at least $\Lambda^2$ by \thmref{thm_quad_update_det}.
	Hence after $k$ iterations of the loop, we have $|\det(F)| \geq \Lambda^{2k} d_0$.
	However, since by construction our interpolation set is contained in the compact set $B(\bx,\min(\Delta,1))\cap C$, there is a global upper bound $d_{\max}$ on $|\det(F)|$ (which is a continuous function of the interpolation points). 
	Hence the loop must terminate after at most $\log(d_{\max}/d_0) / (2\log\Lambda)$ iterations.
\end{proof}

\begin{remark}
    In the unconstrained case $C=\R^n$, \thmref{thm_quad_poised} and in particular its reliance on \thmref{thm_quad_update_det} clarifies the argument behind \cite[Theorem 6.6]{Conn2009}.
\end{remark}

Theorems~\ref{thm_fl_quad} (with $\beta=1$) and \ref{thm_quad_poised} then give us the following.

\begin{corollary}
	Suppose $f$ and $C$ satisfy Assumptions~\ref{ass_smoothness} and \ref{ass_feasible_set} respectively.
	Then for any $\bx\in C$, $\Delta>0$, $p\in\{n+2, \ldots. (n+1)(n+2)/2\}$ and $\Lambda>1$, \algref{alg_quad_poised} produces an interpolation set $\{\by_1,\ldots,\by_p\}$ for which the model $m$ generated by \eqref{eq_min_frob} is fully linear in $B(\bx,\Delta)$.
\end{corollary}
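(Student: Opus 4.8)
The plan is to assemble this corollary directly from the two named results, since it is precisely their composition. First I would invoke \thmref{thm_quad_poised}: for the given $\bx\in C$, $\Delta>0$, $p$ and $\Lambda>1$, it guarantees that \algref{alg_quad_poised} terminates and returns an interpolation set $\{\by_1,\ldots,\by_p\}$ that is $\Lambda$-poised for minimum Frobenius norm interpolation in $B(\bx,\Delta)\cap C$ and additionally satisfies $\|\by_t-\bx\|\leq\min(\Delta,1)$ for every $t=1,\ldots,p$. This supplies exactly the two geometric hypotheses needed downstream (poisedness and the distance bound on the sample points).

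Next I would feed this output into \thmref{thm_fl_quad}. The hypotheses on $f$ and $C$ (Assumptions~\ref{ass_smoothness} and \ref{ass_feasible_set}) carry over verbatim, and $\bx\in C$ is given. The poisedness premise of \thmref{thm_fl_quad} is met by the first conclusion of \thmref{thm_quad_poised}, and the distance bound $\|\by_t-\bx\|\leq\min(\Delta,1)$ is exactly the requirement $\|\by_t-\bx\|\leq\beta\min(\Delta,1)$ with the choice $\beta=1$ (as indicated in the lead-in sentence). With all hypotheses in place, \thmref{thm_fl_quad} concludes that the model $m$ from \eqref{eq_min_frob} is $C$-fully linear in $B(\bx,\Delta)$ in the sense of \defref{def_fl}, with the constants $\kappaef,\kappaeg$ from \eqref{eq_fl_quad_constants} evaluated at $\beta=1$.

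Since the argument is a one-step combination, there is no substantive obstacle; the only point requiring care is terminological consistency, namely that ``$\Lambda$-poised for minimum Frobenius norm interpolation'' (\defref{def_new_poised_quad}, as produced by \thmref{thm_quad_poised}) and ``$\Lambda$-poised for underdetermined quadratic interpolation'' (the phrasing in the hypotheses of \thmref{thm_fl_quad}) denote the same notion, so that the output of \thmref{thm_quad_poised} genuinely satisfies the premise of \thmref{thm_fl_quad}. I would also remark, for the intended use inside \algref{alg_cdfotr}, that fixing $\beta=1$ together with a fixed $\Lambda$ makes $\kappaef$ and $\kappaeg$ depend only on $p$, $\Lambda$, $L_{\grad f}$ and $n$ (the latter through $\kappa_H$ in \lemref{lem_hess_bd}), hence independent of any iteration index, as \defref{def_fl} requires.
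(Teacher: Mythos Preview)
Your proposal is correct and follows exactly the paper's approach: the corollary is stated there as an immediate consequence of \thmref{thm_quad_poised} together with \thmref{thm_fl_quad} applied with $\beta=1$, and your write-up simply spells out that composition (with the harmless extra observations on terminology and iteration-independence of the constants).
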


\section{Conclusions and Future Work}
In \cite{LR_ConvexDFO2021}, a weaker notion of fully linear models was used to construct convergent algorithms for \eqref{eq_main_problem} (namely \algref{alg_cdfotr}), and it was shown that linear interpolation models constructed only with feasible points can yield such models.
Here, we show that \algref{alg_cdfotr} can also be implemented using underdetermined quadratic interpolation models in the minimum Frobenius sense of \cite{Powell2004}, which is a much more useful construction in practice.
As an extra benefit, it also establishes the same results for linear regression models, as a strict generalization of the analysis in \cite{LR_ConvexDFO2021} that is independently useful in the case where only noisy objective evaluations are available.

The natural extension of this work is to the construction and analysis of second-order accurate models (``fully quadratric'' in the language of \cite{Conn2009}) built only using feasible points.
A concrete implementation of Algorithm~\ref{alg_cdfotr} with quadratic interpolation models would also be a useful way to extend the implementation from \cite{LR_ConvexDFO2021} (which only considers nonlinear least-squares objectives, for which linear interpolation models can be practically useful).

\addcontentsline{toc}{section}{References} 
\bibliographystyle{siam}
\bibliography{lr_pubs,refs} 

\begin{thebibliography}{10}

\bibitem{Audet2017}
{\sc C.~Audet and W.~Hare}, {\em Derivative-Free and Blackbox Optimization},
  Springer Series in Operations Research and Financial Engineering, Springer,
  Cham, Switzerland, 2017.

\bibitem{Augustin2017}
{\sc F.~Augustin and Y.~M. Marzouk}, {\em A trust-region method for
  derivative-free nonlinear constrained stochastic optimization}, arXiv
  preprint arXiv:1703.04156,  (2017).

\bibitem{Beck2017}
{\sc A.~Beck}, {\em First-Order Methods in Optimization}, MOS-SIAM Series on
  Optimization, MOS/SIAM, Philadelphia, 2017.

\bibitem{Benzi2005}
{\sc M.~Benzi, G.~H. Golub, and J.~Liesen}, {\em Numerical solution of saddle
  point problems}, Acta Numerica, 14 (2005), pp.~1--137.

\bibitem{Billups2013}
{\sc S.~C. Billups, J.~Larson, and P.~Graf}, {\em Derivative-free optimization
  of expensive functions with computational error using weighted regression},
  SIAM Journal on Optimization, 23 (2013), pp.~27--53.

\bibitem{LR_DFOLS2019}
{\sc C.~Cartis, J.~Fiala, B.~Marteau, and L.~Roberts}, {\em Improving the
  flexibility and robustness of model-based derivative-free optimization
  solvers}, ACM Transactions on Mathematical Software, 45 (2019), pp.~32:1--41.

\bibitem{Cartis2012b}
{\sc C.~Cartis, N.~I.~M. Gould, and P.~L. Toint}, {\em An adaptive cubic
  regularization algorithm for nonconvex optimization with convex constraints
  and its function-evaluation complexity}, IMA Journal of Numerical Analysis,
  32 (2012), pp.~1662--1695.

\bibitem{LR_DFOGN2019}
{\sc C.~Cartis and L.~Roberts}, {\em A derivative-free {G}auss-{N}ewton
  method}, Mathematical Programming Computation, 11 (2019), pp.~631--674.

\bibitem{Conejo2015}
{\sc P.~Conejo, E.~Karas, and L.~Pedroso}, {\em A trust-region derivative-free
  algorithm for constrained optimization}, Optimization Methods and Software,
  30 (2015), pp.~1126--1145.

\bibitem{Conejo2013}
{\sc P.~Conejo, E.~Karas, L.~Pedroso, A.~Ribeiro, and M.~Sachine}, {\em Global
  convergence of trust-region algorithms for convex constrained minimization
  without derivatives}, Applied Mathematics and Computation, 220 (2013),
  pp.~324--330.

\bibitem{Conn1998}
{\sc A.~Conn, K.~Scheinberg, and {\relax Ph}.~Toint}, {\em A derivative free
  optimization algorithm in practice}, in 7th {AIAA}/{USAF}/{NASA}/{ISSMO
  Symposium} on Multidisciplinary Analysis and Optimization, St. Louis, MO,
  USA, 1998, American Institute of Aeronautics and Astronautics.

\bibitem{Conn2000}
{\sc A.~R. Conn, N.~I.~M. Gould, and P.~L. Toint}, {\em Trust-Region Methods},
  vol.~1 of MPS-SIAM Series on Optimization, MPS/SIAM, Philadelphia, 2000.

\bibitem{Conn2007}
{\sc A.~R. Conn, K.~Scheinberg, and L.~N. Vicente}, {\em Geometry of
  interpolation sets in derivative free optimization}, Mathematical
  Programming, 111 (2007), pp.~141--172.

\bibitem{Conn2008}
\leavevmode\vrule height 2pt depth -1.6pt width 23pt, {\em Geometry of sample
  sets in derivative-free optimization: Polynomial regression and
  underdetermined interpolation}, IMA Journal of Numerical Analysis, 28 (2008),
  pp.~721--748.

\bibitem{Conn2009a}
\leavevmode\vrule height 2pt depth -1.6pt width 23pt, {\em Global convergence
  of general derivative-free trust-region algorithms to first- and second-order
  critical points}, SIAM Journal on Optimization, 20 (2009), pp.~387--415.

\bibitem{Conn2009}
\leavevmode\vrule height 2pt depth -1.6pt width 23pt, {\em Introduction to
  Derivative-Free Optimization}, vol.~8 of MPS-SIAM Series on Optimization,
  MPS/SIAM, Philadelphia, 2009.

\bibitem{Garmanjani2016}
{\sc R.~Garmanjani, D.~J{\'{u}}dice, and L.~N. Vicente}, {\em Trust-region
  methods without using derivatives: Worst case complexity and the nonsmooth
  case}, SIAM Journal on Optimization, 26 (2016), pp.~1987--2011.

\bibitem{Golub1996}
{\sc G.~H. Golub and C.~F. Van~Loan}, {\em Matrix Computations}, The Johns
  Hopkins University Press, Baltimore, 3rd~ed., 1996.

\bibitem{Gratton2011}
{\sc S.~Gratton, P.~L. Toint, and A.~Tr{\"o}ltzsch}, {\em An active-set
  trust-region method for derivative-free nonlinear bound-constrained
  optimization}, Optimization Methods and Software, 26 (2011), pp.~873--894.

\bibitem{Gumma2014}
{\sc E.~A.~E. Gumma, M.~H.~A. Hashim, and M.~M. Ali}, {\em A derivative-free
  algorithm for linearly constrained optimization problems}, Computational
  Optimization and Applications, 57 (2014), pp.~599--621.

\bibitem{Harville2008}
{\sc D.~A. Harville}, {\em Matrix Algebra From a Statistician’s Perspective},
  Springer, New York, 2008.

\bibitem{LR_ConvexDFO2021}
{\sc M.~Hough and L.~Roberts}, {\em Model-based derivative-free methods for
  convex-constrained optimization}, SIAM Journal on Optimization, 32 (2022),
  pp.~2552--2579.

\bibitem{Kulshreshtha2015}
{\sc K.~Kulshreshtha, B.~Jurgelucks, F.~Bause, J.~Rautenberg, and
  C.~Unverzagt}, {\em Increasing the sensitivity of electrical impedance to
  piezoelectric material parameters with non-uniform electrical excitation},
  Journal of Sensors and Sensor Systems, 4 (2015), pp.~217--227.

\bibitem{Larson2016}
{\sc J.~Larson and S.~C. Billups}, {\em Stochastic derivative-free optimization
  using a trust region framework}, Computational Optimization and Applications,
  64 (2016), pp.~619--645.

\bibitem{Larson2019}
{\sc J.~Larson, M.~Menickelly, and S.~M. Wild}, {\em Derivative-free
  optimization methods}, Acta Numerica, 28 (2019), pp.~287--404.

\bibitem{Powell2004}
{\sc M.~J.~D. Powell}, {\em Least {F}robenius norm updating of quadratic models
  that satisfy interpolation conditions}, Mathematical Programming, 100 (2004),
  pp.~183--215.

\bibitem{Powell2004a}
\leavevmode\vrule height 2pt depth -1.6pt width 23pt, {\em On the use of
  quadratic models in unconstrained minimization without derivatives},
  Optimization Methods and Software, 19 (2004), pp.~399--411.

\bibitem{Powell2006}
\leavevmode\vrule height 2pt depth -1.6pt width 23pt, {\em The {NEWUOA}
  software for unconstrained optimization without derivatives}, in Large-Scale
  Nonlinear Optimization, P.~Pardalos, G.~Di~Pillo, and M.~Roma, eds., vol.~83,
  Springer US, Boston, MA, 2006, pp.~255--297.

\bibitem{Powell2009}
\leavevmode\vrule height 2pt depth -1.6pt width 23pt, {\em The {BOBYQA}
  algorithm for bound constrained optimization without derivatives}, Tech. Rep.
  DAMTP 2009/NA06, University of Cambridge, 2009.

\bibitem{Scheurer2015}
{\sc M.~Scheuerer and T.~M. Hamill}, {\em Statistical postprocessing of
  ensemble precipitation forecasts by fitting censored, shifted gamma
  distributions}, Monthly Weather Review, 143 (2015), pp.~4578 -- 4596.

\bibitem{Tett2022}
{\sc S.~F.~B. Tett, J.~M. Gregory, N.~Freychet, C.~Cartis, M.~J. Mineter, and
  L.~Roberts}, {\em Does model calibration reduce uncertainty in climate
  projections?}, Journal of Climate, 35 (2022), pp.~2585--2602.

\bibitem{Trefethen2020}
{\sc L.~N. Trefethen}, {\em Approximation Theory and Approximation Practice},
  SIAM, Philadelphia, 2020.

\bibitem{Ughi2020}
{\sc G.~Ughi, V.~Abrol, and J.~Tanner}, {\em An empirical study of
  derivative-free-optimization algorithms for targeted black-box attacks in
  deep neural networks}, Optimization and Engineering, 23 (2022),
  pp.~1319--1346.

\bibitem{Wild2009}
{\sc S.~M. Wild}, {\em Derivative-Free Optimization Algorithms for
  Computationally Expensive Functions}, PhD thesis, Cornell University, 2009.

\bibitem{Wild2008}
{\sc S.~M. Wild, R.~G. Regis, and C.~A. Shoemaker}, {\em {ORBIT}: Optimization
  by radial basis function interpolation in trust-regions}, SIAM Journal on
  Scientific Computing, 30 (2008), pp.~3197--3219.

\bibitem{Xuan2024}
{\sc M.~Q. Xuan and J.~Nocedal}, {\em A feasible method for constrained
  derivative-free optimization}, arXiv preprint arXiv:2402.11920,  (2024).

\bibitem{Zhang2014}
{\sc Z.~Zhang}, {\em {S}obolev seminorm of quadratic functions with
  applications to derivative-free optimization}, Mathematical Programming, 146
  (2014), pp.~77--96.

\end{thebibliography}


\end{document}